
%

\documentclass{amsart}

\usepackage{amsmath,bm}
\usepackage{amssymb}
\usepackage{amsthm}
\usepackage{mathtools}

\usepackage[abbrev]{amsrefs}
 \usepackage{url}

\newtheorem{theorem}{Theorem}[section]
\newtheorem{lemma}[theorem]{Lemma}

\theoremstyle{definition}

\newtheorem{example}[theorem]{Example}

\theoremstyle{remark}
\newtheorem{remark}[theorem]{Remark}

\theoremstyle{cor}
\newtheorem{cor}[theorem]{Corollary}

\theoremstyle{prop}
\newtheorem{prop}[theorem]{Proposition}

\numberwithin{equation}{section}

\newcommand{\N}{\mathbb{N}}

\newcommand{\R}{\mathbb{R}}
\newcommand{\eps}{\varepsilon}
\newcommand{\zt}{\zeta}
\newcommand{\gm}{\gamma}
\newcommand{\bone}{\mathbf{1}}
\newcommand{\wg}{\wedge}

\DeclareMathOperator*{\esssup}{ess\,sup}
\newcommand{\relmiddle}[1]{\mathrel{}\middle#1\mathrel{}}

\begin{document}

\title{Compactness of semigroups of explosive symmetric Markov processes}


\author{Kouhei Matsuura}
\address{Mathematical Institute, Tohoku University, Aoba, Sendai 980-8578, Japan}
\curraddr{}
\email{kouhei.matsuura.r3@dc.tohoku.ac.jp}
\thanks{}

\subjclass[2010]{60J60, 47D07, 47D08 }

\keywords{compact operator, Markov semigroup, symmetric Markov process}

\date{}

\dedicatory{}

\begin{abstract}
In this paper, we investigate spectral properties of explosive symmetric Markov processes. Under a condition on its $1$-resolvent, we prove the $L^1$-semigroups of Markov processes become compact operators.  
\end{abstract}

\maketitle

\section{Introduction}
Let $E$ be a locally compact separable metric space and $\mu$ a positive Radon measure on $E$ with topological full support. Let $X=(\{X_t\}_{t \ge 0}, \{P_x\}_{x \in E}, \zt)$ be a $\mu$-symmetric Hunt process on $E$. Here $\zt$ is the life time of $X$. We assume $X$ satisfies the irreducible property, resolvent strong Feller property, in addition, {\it tightness}  property, namely, for any $\eps>0$, there exists a compact subset $K \subset E$ such that $\sup_{x \in E}R_{1}\bone_{E \setminus K}(x)<\eps$. Here $R_1$ is the $1$-resolvent of $X$. The family of symmetric Markov processes with these three properties is called {\it Class} (T).

In  \cite{T3}, the spectral properties of a Markov process in Class (T) are studied. For example, if $\mu$-symmetric Hunt process $X$ belongs to Class (T), the semigroup becomes a compact operator on $L^{2}(E,\mu)$. This implies the corresponding non-positive self-adjoint operator has only discrete spectrum. Furthermore, it is shown that the eigenfunctions have bounded continuous versions. The self-adjoint operator is extended to linear operators $(\mathcal{L}^p,D(\mathcal{L}^p))$ on $L^{p}(E,\mu)$ for any $1\le p\le \infty$. In \cite{T2}, it is shown that the spectral bounds of the operators  $(\mathcal{L}^p,D(\mathcal{L}^p))$ are independent of $p \in [1,\infty]$. Then, a question arises: {\it if a $\mu$-symmetric Hunt process $X$ belongs to Class (T), the spectra of $(\mathcal{L}^p,D(\mathcal{L}^p))$ are independent of $p \in [1,\infty]$?}

In this paper, we answer this question by showing that the semigroup of $X$ becomes a compact operator on $L^{1}(E,\mu)$ under some additional conditions. These include the condition that $\lim_{x \to \partial}R_{1}\bone_{E}(x)=0$ which are more restrictive than Class (T). However, it will be proved that for the symmetric $\alpha$-stable process $X^D$ on an open subset $D \subset \R^d$ the following assertions are equivalent (Theorem~\ref{th:4}):
\begin{itemize}
\item[(i)] for any $1\le p \le \infty$, the semigroup of $X^D$ is a compact operator on $L^{p}(D,m)$;
\item[(ii)] the semigroup of $X^D$ is a compact operator on $L^{2}(D,m)$;
\item[(iii)] $\lim_{ |x| \to \infty}E_{x}[\tau_D]=0$;
\item[(iv)] $\lim_{ |x| \to \infty} \int_{0}^{\infty}e^{-t}P_{x}[\tau_D>t]\,dt=0$.
\end{itemize}
Here, $m$ is the Lebesgue measure on $D$ and $\tau_D=\inf\{t>0 \mid X_t^D \notin D\}$. The above conditions are equivalent to 
\begin{itemize}
\item[(iii)'] $\lim_{x \in D,\ |x| \to \infty}E_{x}[\tau_D]=0$
\end{itemize}
provided $D$ is unbounded. In fact, the assertion (iv) is equivalent to the tightness property of $X$. Thus, for the symmetric $\alpha$-stable process $X^D$ on an open subset $D$, the tightness property is equivalent to all assertions in the Theorem~\ref{th:4} mentioned above and implies that the spectra are independent of $p \in [1,\infty]$. The key idea is to give an approximate estimate by the semigroup of part processes by employing Dynkin's formula (Proposition~\ref{prop:on}). 

In \cite[Theorem~4.2]{TTT}, the authors consider the rotationally symmetric $\alpha$-stable process on $\R^d$ with a killing potential $V$. Under a suitable condition on $V$, they proved the tightness propety of the killed stable process. In Example~\ref{ex:kill} below, we will prove the semigroup of the process becomes a compact operator on $L^{1}(\R^d,m)$ under the  assumption on $V$ essentially equivalent to \cite[Theorem~4.2]{TTT}.

In Example~\ref{ex:tc} below, we will consider the time-changed process of the rotationally symmetric $\alpha$-stable process on $\R^d$ by the additive functional $A_t=\int_{0}^{t}W(X_s)^{-1}\,ds$. Here $\alpha \in (0,2]$ and $W$ is a nonnegative Borel measurable function on $\R^d$. The Revuz measure of $A$ is $W^{-1}m$ and the time-changed process $X^W$ becomes a $W^{-1}m$-symmetric Hunt process on $\R^d$. The life time of $X^W$ equals to $A_{\infty}$. To investigate the spectral property of $X^W$ is just to investigate the spectral properties of the operator of the form $\mathcal{L}^W=-W(x)(-\Delta)^{\alpha/2}$ on $L^{2}(\R^d,W^{-1}m)$. When $W(x)=1+|x|^{\beta}$ and $\alpha=2$, it is shown in \cite[Proposition~2.2]{MS} that the spectrum of $\mathcal{L}^W$ is discrete in $L^{2}(\R^d,W^{-1}m)$ if and only if $\beta>2$. When $\alpha \in (0,2)$, $d>\alpha$, and $W(x)=1+|x|^\beta$ with $\beta \ge 0$, it is shown in \cite[Proposition~3.3]{TTT} that 
the supectrum of $\mathcal{L}^W$ in $L^{2}(\R^d,W^{-1}m)$ is discrete if and only if $\beta>\alpha$. This is equivalent to that the semigroup of $X^W$ is a compact operator on $L^{2}(\R^d,W^{-1}m)$ if and only if $\beta>\alpha$. In Theorem~\ref{th:tc} below, we shall prove that if $\beta>\alpha$, the semigroup becomes a compact operator on $L^{1}(\R^d,W^{-1}m)$.

\section{Main results}
Let $E$ be a locally compact separable metric space and $\mu$ a positive Radon measure on $E$. Let $E_{\partial }$ be the its one-point compactification $E_{\partial }=E \cup \{\partial \}$. A $[-\infty,\infty]$-valued function $u$ on $E$ is extended to a function on $E_{\partial }$ by setting $u(\partial )=0$.

Let $X=(\{X_t\}_{t \ge 0}, \{P_x\}_{x \in E}, \zt)$ be a $\mu$-symmetric Hunt process on $E$. The semigroup $\{p_t\}_{t>0}$ and the resolvent $\{R_{\alpha}\}_{\alpha \ge 0}$ are defined as follows:
\begin{align*}
&p_{t}f(x)=E_{x}[f(X_t)]=E_{x}[f(X_t):t<\zt], \\
&R_{\alpha}f(x)=E_{x}\left[\int_{0}^{\zt}\exp(-\alpha t)f(X_t)\,dt \right], \quad f \in \mathcal{B}_{b}(E),\ x \in E.
\end{align*}
Here, $\mathcal{B}_{b}(E)$ is the space of bounded Borel mesurable functions on $E$. $E_x$ denotes the expectation with respect to $P_x$. By the symmetry and the Markov property of $\{p_t\}_{t>0}$, $\{p_t\}_{t>0}$ and $\{R_{\alpha}\}_{\alpha>0}$ are canonically extended to operators on $L^{p}(E,\mu)$ for any $1\le p \le \infty$. The extensions are also denoted by $\{p_t\}_{t>0}$ and $\{R_{\alpha}\}_{\alpha>0}$, respectively.

For an open subset $U \subset E$, we define $\tau_U$ by $\tau_U=\inf \{t>0 \mid X_t \notin U\}$ with the convention that $\inf \emptyset=\infty$. We denote by $X^U$ the part of $X$ on $U$. Namely, $X^U$ is defined as follows. 
\begin{equation*}
X_t^{U}=\begin{cases}
X_t, \quad &t<\tau_U \\
\partial,\quad &t \ge \tau_U.
\end{cases}
\end{equation*}
$X^{U}=(\{X_t^{U}\}_{t\ge0}, \{P_{x}\}_{x \in U})$ also becomes a Hunt process on $U$ with life time $\tau_U$. The semigroup $\{p_t^{U}\}_{t>0}$ is identified with 
\begin{align*}
&p_{t}^{U}f(x)=E_{x}[f(X_t^{U})]=E_{x}[f(X_t):t<\tau_U]
\end{align*}
$\{p_t^U\}_{t>0}$ is also symmetric with respect to the measure $\mu$ restricted to $U$. $\{p_t^U\}_{t>0}$ and $\{R_\alpha^U\}_{\alpha>0}$ are also extended to  operators on $L^{p}(U,\mu)$ for any $1\le p \le \infty$ and the extensions are also denoted by $\{p_t^U\}_{t>0}$ and $\{R_{\alpha}^U\}_{\alpha>0}$, respectively.

We now make the following conditions on the symmetric Markov process $X$.

\begin{enumerate}
\item[{\bf I.}]({\bf Semigroup strong Feller}) For any $t>0$, $p_t(\mathcal{B}_{b}(E)) \subset C_{b}(E)$, where $C_{b}(E)$ is the space of bounded continuous functions on $E$.
\item[{\bf II.}]({\bf Tightness property}) $\lim_{x \to \partial }R_{1}\bone_{E}(x)=0$.
\item[{\bf III.}]({\bf Local $L^\infty$-compactness}) For any $t>0$ and open subset $U \subset E$ with $\mu(U)<\infty$, $p_{t}^{U}$ is a compact operator on $L^{\infty}(U,\mu)$.
\end{enumerate}

\begin{remark}\label{re}
\begin{itemize}
\item[(i)]  By the condition~I, the semigroup kernel of $X$ is absolutely continuous with respcet to $\mu$:
$$p_{t}(x,dy)=p_{t}(x,y)\,d\mu(y).$$
Furthermore, the resolvent of $X$ is strong Feller: for any $\alpha>0$, $R_{\alpha}(\mathcal{B}_{b}(E)) \subset C_{b}(E)$.
\item[(ii)] The conditions~I and II lead us to the tightness property in the sense of \cite{T4,T3}: for any $\eps>0$, there exists a compact subset $K \subset E$ such that $\sup_{x \in E}R_{1}\bone_{E \setminus K}(x)<\eps$. See \cite[Remark~2.1~(ii)]{T4} for details. We denote by $C_{\infty}(E)$ the space of continuous functions on $E$ vanishing at infinity. Under the condition~I and the invariance $R_{1}(C_{\infty}(E)) \subset C_{\infty}(E)$ of $X$, the condition~II is equivalent to the tightness property in the sense of \cite{T4,T3}. See \cite[Remark~2.1~(iii)]{T4} for details. In addition to the conditions~I and II, we assume $X$ is irreducible in the sense of \cite{T4}. Then, by using \cite[Lemma~2.2~(ii), Lemma~2.6, Corollary~3.8]{T4}, we can show $\sup_{x \in E}E_{x}[\exp(\lambda \zt)]<\infty$ for some $\lambda>0$ and thus $R_{0}\bone_{E}$ is bounded on $E$. We further see from the strong Feller property and the resolvent equation of $\{R_{\alpha}\}_{\alpha>0}$ that $R_{0}\bone_{E} \in C_{\infty}(E)$.
\item[(iii)]  The conditions~I and II imply $p_{t}(C_{\infty}(E)) \subset C_{\infty}(E)$ for any $t>0$, and thus $X$ is doubly Feller in the sense of \cite{CK}. This implies that for any $t>0$ and open $U \subset E$, $p_{t}^{U}$ is strong Feller: $p_{t}^{U}(\mathcal{B}_{b}(U)) \subset C_{b}(U)$. See \cite[Theorem~1.4]{CK} for the proof.
\item[(iv)] Let $U \subset E$ be an open subset with $\mu(U)<\infty$. The condition~III is satisfied if the semigroup of $X^U$ is ultracontractive: for any $t>0$ and $f \in L^{1}(U,\mu)$, $p_{t}^{U}f$ belongs to $L^{\infty}(U,\mu)$. Indeed, we see from \cite[Theorem~1.6.4]{Da} that $p_{t}^{U}$ is a compact operator on $L^{1}(U,\mu)$ and so is on $L^{\infty}(U,\mu)$. In particular, if the semigroup of $X$ is ultracontractive, the condition~III is satisfied.
\end{itemize}
\end{remark}

We are ready to state the main result of this paper.

\begin{theorem}\label{th:1}
Assume $X$ satisfies the conditions from I to III. Then, for any $t>0$, $p_t$ becomes a compact operator on $L^{\infty}(E,\mu)$. 
\end{theorem}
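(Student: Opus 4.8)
The plan is to realise $p_t$ as an operator-norm limit on $L^{\infty}(E,\mu)$ of the part semigroups $p_t^{U_n}$ along an exhaustion $U_n\uparrow E$, and then to use that the space of compact operators is closed in the operator norm. Since $E$ is locally compact, separable and metric, it is $\sigma$-compact, so I would choose open sets $U_n$ with $\overline{U_n}$ compact, $\overline{U_n}\subset U_{n+1}$ and $\bigcup_n U_n=E$; then $\mu(U_n)\le\mu(\overline{U_n})<\infty$ because $\mu$ is Radon. By condition~III each $p_t^{U_n}$ is compact on $L^{\infty}(U_n,\mu)$. Viewing $p_t^{U_n}$ on $L^{\infty}(E,\mu)$ as the composition of the contractive restriction $L^{\infty}(E,\mu)\to L^{\infty}(U_n,\mu)$, the compact operator on $L^{\infty}(U_n,\mu)$, and the isometric extension by zero $L^{\infty}(U_n,\mu)\to L^{\infty}(E,\mu)$, I get that $p_t^{U_n}$ is compact on $L^{\infty}(E,\mu)$ as well.

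Next I would estimate the difference. For nonnegative $f$ one has $p_t^{U_n}f\le p_tf$, so the kernel of $p_t-p_t^{U_n}$ is nonnegative; applying the operator to an $f$ with $\|f\|_{\infty}\le1$ and bounding pointwise gives the control
\[
\|p_t-p_t^{U_n}\|_{L^{\infty}\to L^{\infty}}\le \sup_{x\in E}g_n(x),\qquad g_n(x):=p_t\bone_E(x)-p_t^{U_n}\bone_{U_n}(x).
\]
Since $\tau_{U_n}\le\zt$, one has the probabilistic expression $g_n(x)=P_x[\tau_{U_n}\le t<\zt]\ge0$. As $U_n\uparrow E$ forces $\tau_{U_n}$ to increase, the sequence $(g_n)$ is decreasing, and on the open set $U_n$ the function $g_n$ is continuous because both $p_t\bone_E$ (condition~I) and $p_t^{U_n}\bone_{U_n}$ (the strong Feller property of the part process, Remark~\ref{re}~(iii)) are continuous there; in particular $g_n$ is continuous on any fixed compact set once $n$ is large. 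Using the standard fact that $\tau_{U_n}\uparrow\zt$ $P_x$-a.s.\ along such an exhaustion (on $\{\lim_n\tau_{U_n}<\zt\}$ quasi-left-continuity would force $X_{\tau_{U_n}}$ to converge in $E$ while simultaneously leaving every compact set, which is impossible), the event $\{\tau_{U_n}\le t<\zt\}$ decreases to a $P_x$-null set, so $g_n\downarrow0$ pointwise.

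The hard part is to upgrade this pointwise convergence to convergence uniform in $x\in E$, and this is exactly where the tightness condition~II is used. Given $\eps>0$, the elementary bound
\[
P_x[\,t<\zt\,]\le \frac{1}{1-e^{-t}}\,E_x\!\left[1-e^{-\zt}\right]=\frac{1}{1-e^{-t}}\,R_1\bone_E(x)
\]
combined with $\lim_{x\to\partial}R_1\bone_E(x)=0$ lets me fix a compact $K\subset E$ with $g_n(x)\le P_x[\,t<\zt\,]\le\eps$ for all $x\in E\setminus K$ and all $n$. On the compact set $K$ the functions $g_n$ are (for $n$ large enough that $K\subset U_n$) continuous, nonnegative, and decrease to $0$, so Dini's theorem gives $\sup_{x\in K}g_n(x)\to0$. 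Combining the two regions yields $\limsup_n\sup_{x\in E}g_n(x)\le\eps$, and since $\eps>0$ is arbitrary, $\|p_t-p_t^{U_n}\|_{L^{\infty}\to L^{\infty}}\to0$. Then $p_t$ is the operator-norm limit of the compact operators $p_t^{U_n}$ and hence is compact on $L^{\infty}(E,\mu)$. I expect the delicate points to be the continuity of $g_n$ near the boundary of $U_n$ (sidestepped by only using continuity on the interior compact set $K$) and the justification of $\tau_{U_n}\uparrow\zt$, while the genuinely essential input is the interplay of Dini's theorem on $K$ with the tightness estimate on $E\setminus K$.
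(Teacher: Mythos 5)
Your proof is correct, and its skeleton --- approximating $p_t$ in operator norm on $L^{\infty}(E,\mu)$ by the compact part semigroups $p_t^{U_n}$, then combining Dini's theorem on a compact set with the tightness condition~II off that set --- is the same as the paper's. Where you genuinely diverge is in how the off-compact region is controlled, and your route is simpler. The paper's Proposition~\ref{prop:on} bounds the tail contribution by $(4/t)\sup_{x\in E\setminus K_m}E_x[\zt]$, a quantity that condition~II does not directly control (it may even fail to be finite); to repair this, the paper introduces the $1$-subprocess $X^{(1)}$, proves the commutation relation $T_{n,t}=e^{t}T_{n,t}^{(1)}$ (Lemma~\ref{lem:commute}), and applies the estimate to $X^{(1)}$, whose expected lifetime is exactly $E_x^{(1)}[\zt^{(1)}]=R_1\bone_E(x)$. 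You bypass this entire detour with the elementary inequality
\[
g_n(x)=P_x[\tau_{U_n}\le t<\zt]\le P_x[t<\zt]\le\frac{1}{1-e^{-t}}\,E_x\bigl[1-e^{-\zt}\bigr]=\frac{1}{1-e^{-t}}\,R_1\bone_E(x),
\]
so condition~II enters directly, with the constant $(1-e^{-t})^{-1}$ in place of the paper's $e^{t}\times(4/t)$ (immaterial for the conclusion). A second, smaller economy: you only need the identity $(p_t-p_t^{U_n})f(x)=E_x[f(X_t):\tau_{U_n}\le t<\zt]$, which is \eqref{eq:eq1} in the paper, together with positivity and the bound by $\bone_E$; the full Dynkin formula of Lemma~\ref{lem:d} involving $p_{t-\tau_U}f(X_{\tau_U})$ is not required, and the continuity needed for Dini's theorem comes straight from condition~I and Remark~\ref{re}~(iii), since $g_n=p_t\bone_E-p_t^{U_n}\bone_{U_n}$ is a difference of functions continuous on $U_n$, exactly as in Lemma~\ref{lem:uni}. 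In short, both proofs rest on the same three inputs (compactness of the part semigroups via condition~III, Dini's theorem on compacts, tightness at infinity), but your tail estimate renders the paper's auxiliary machinery around $X^{(1)}$ unnecessary.
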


By the symmetry of $X$, each $p_{t}:L^{\infty}(E,\mu) \to L^{\infty}(E,\mu)$ is regarded as the dual-operator of $p_{t}:L^{1}(E,\mu) \to L^{1}(E,\mu)$. By using Schauder's theorem, we obtain the next corollary.

\begin{cor}\label{th:2}
Assume $X$ satisfies the conditions from I to III. Then, for any $t>0$, $p_{t}$ becomes a compact operator on $L^{1}(E,\mu)$. 
\end{cor}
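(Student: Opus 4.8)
The plan is to obtain the $L^{1}$-compactness as a direct dual consequence of the $L^{\infty}$-compactness furnished by Theorem~\ref{th:1}, exactly as the text announces. Since $E$ is a locally compact separable metric space and $\mu$ is a positive Radon measure, $\mu$ is $\sigma$-finite; hence the dual of $L^{1}(E,\mu)$ may be identified isometrically with $L^{\infty}(E,\mu)$ through the pairing $\langle f,g\rangle=\int_{E}fg\,d\mu$. The operator $p_{t}$ is a bounded (indeed Markovian contraction) operator on $L^{1}(E,\mu)$ by the canonical extension recalled in the setup, so the adjoint machinery applies.

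First I would verify that, under this identification, the operator $p_{t}\colon L^{\infty}(E,\mu)\to L^{\infty}(E,\mu)$ is precisely the Banach-space adjoint of $p_{t}\colon L^{1}(E,\mu)\to L^{1}(E,\mu)$. This is immediate from the $\mu$-symmetry of the semigroup: for $f\in L^{1}(E,\mu)$ and $g\in L^{\infty}(E,\mu)$ one has
\begin{equation*}
\langle p_{t}f,g\rangle=\int_{E}(p_{t}f)\,g\,d\mu=\int_{E}f\,(p_{t}g)\,d\mu=\langle f,p_{t}g\rangle,
\end{equation*}
so that $(p_{t})^{*}=p_{t}$ in this duality.

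Then I would invoke Schauder's theorem, which asserts that a bounded linear operator on a Banach space is compact if and only if its adjoint is compact. Taking the Banach space to be $L^{1}(E,\mu)$, its dual $L^{\infty}(E,\mu)$, the operator $T=p_{t}$ on $L^{1}$, and $T^{*}=p_{t}$ on $L^{\infty}$, Theorem~\ref{th:1} guarantees that $T^{*}$ is compact, whence $T=p_{t}$ is compact on $L^{1}(E,\mu)$. This completes the argument for every $t>0$.

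The proof is essentially formal, so I do not anticipate a genuine obstacle; the only points requiring care are the $\sigma$-finiteness of $\mu$ (needed so that $(L^{1})^{*}=L^{\infty}$ holds and so that Fubini justifies the symmetry identity above) and the precise statement of Schauder's theorem for the non-reflexive pair $(L^{1},L^{\infty})$. The latter is exactly what makes the duality run in the correct direction: one cannot in general transfer compactness from $L^{1}$ to $L^{\infty}$ by mere interpolation, but the implication used here, namely compactness of the adjoint forcing compactness of the operator, is precisely the content of Schauder's theorem, so the logic is sound.
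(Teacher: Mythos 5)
Your proposal is correct and follows exactly the paper's own route: identify $p_{t}$ on $L^{\infty}(E,\mu)$ as the Banach-space adjoint of $p_{t}$ on $L^{1}(E,\mu)$ via $\mu$-symmetry, then apply Schauder's theorem in the direction ``adjoint compact $\Rightarrow$ operator compact'' together with Theorem~\ref{th:1}. Your added care about $\sigma$-finiteness of $\mu$ (guaranteeing $(L^{1})^{*}=L^{\infty}$) is a point the paper leaves implicit, but it is the same argument.
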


Let $(\mathcal{L}^p,D(\mathcal{L}^p))$ be the generator of $\{p_t\}_{t>0}$ on $L^{p}(E,\mu)$, $1\le p\le \infty$.
By using \cite[Theorem~1.6.4]{Da}, we can show the next theorem.
\begin{theorem}\label{th:3}Assume $X$ satisfies the conditions from I to III. Then,
\begin{itemize}
\item[(i)] for any $1\le p \le \infty$ and $t>0$, $p_t$ is a compact operator on $L^{p}(E,\mu)$;
\item[(ii)] spectra of $(\mathcal{L}^p,D(\mathcal{L}^p))$ are independent of $p \in [1,\infty]$ and the eigenfunctions of $(\mathcal{L}^2,D(\mathcal{L}^2))$ belong to $L^{p}(E,\mu)$ for any $1 \le p \le \infty$.
\end{itemize}
\end{theorem}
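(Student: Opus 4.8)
The plan is to deduce both assertions from the two endpoint results already in hand --- compactness of $p_t$ on $L^{\infty}(E,\mu)$ (Theorem~\ref{th:1}) and on $L^{1}(E,\mu)$ (Corollary~\ref{th:2}) --- together with the self-adjointness of $p_t$ on $L^{2}(E,\mu)$ and the fact that the operators $\{p_t\}$, and likewise the resolvents $\{R_\alpha\}$, form a consistent family: they agree on $L^{p}(E,\mu)\cap L^{q}(E,\mu)$, being restrictions of one and the same kernel operator.

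For (i) I would first note that $p_t$ is bounded on every $L^{p}(E,\mu)$, $1\le p\le\infty$, by Riesz--Thorin interpolation, since it is an $L^{1}$--$L^{\infty}$ contraction. Compactness on the intermediate spaces $1<p<\infty$ then follows from an interpolation theorem for compact operators (Krasnosel'ski\u\i's theorem, or Calder\'on--Cwikel interpolation of compactness): as $p_t$ is compact on the endpoint $L^{1}(E,\mu)$ and bounded on $L^{\infty}(E,\mu)$, it is compact on each $L^{p}(E,\mu)$ with $1<p<\infty$. The endpoints $p=1,\infty$ are exactly Corollary~\ref{th:2} and Theorem~\ref{th:1}, so (i) holds for all $1\le p\le\infty$; in particular the self-adjoint operator $p_t$ is compact on $L^{2}(E,\mu)$.

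For (ii), compactness of $p_t$ on $L^{p}(E,\mu)$ makes the resolvent $(\alpha-\mathcal{L}^p)^{-1}$ compact, so for each $p$ the spectrum of $(\mathcal{L}^p,D(\mathcal{L}^p))$ is discrete: it consists exactly of eigenvalues of finite multiplicity with no finite accumulation point. It therefore suffices to match these eigenvalues across $p$, and for this I would show that every eigenfunction lies in $\bigcap_{1\le q\le\infty}L^{q}(E,\mu)$. An eigenfunction, characterized by $p_t\phi=e^{\lambda t}\phi$ for all $t>0$, satisfies $\phi=e^{-\lambda t}p_t\phi$, so the matching reduces to the smoothing property $p_t\bigl(L^{p}(E,\mu)\bigr)\subset L^{1}(E,\mu)\cap L^{\infty}(E,\mu)$. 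Granting this, any eigenfunction of $\mathcal{L}^2$ lies in every $L^{q}$ and, by consistency, is an eigenfunction of $\mathcal{L}^q$ with the same eigenvalue, giving $\sigma(\mathcal{L}^2)\subset\sigma(\mathcal{L}^q)$; conversely the same smoothing pushes any $L^{p}$-eigenfunction into $L^{2}$, yielding the reverse inclusion. Hence $\sigma(\mathcal{L}^p)=\sigma(\mathcal{L}^2)$ for all $p\in[1,\infty]$ and the eigenfunctions of $\mathcal{L}^2$ belong to every $L^{p}$.

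The main obstacle is precisely this smoothing/regularity step, because condition~III supplies only \emph{local} $L^{\infty}$-compactness, that is local ultracontractivity on finite-measure sets (Remark~\ref{re}(iv)), rather than global ultracontractivity of $X$. Boundedness of eigenfunctions should follow from the representation $\phi=(1-\lambda)R_{1}\phi$ together with the strong Feller property and tightness (Remark~\ref{re}(i),(iii) and condition~II), i.e.\ from the Class~(T) regularity of eigenfunctions of \cite{T3}, giving $\phi\in C_{\infty}(E)\subset L^{\infty}(E,\mu)$. The genuinely delicate point is $L^{1}$-integrability when $\mu(E)=\infty$, since a bounded eigenfunction vanishing at infinity need not be integrable; here I would exhaust $E$ by open sets $U$ of finite measure and approximate $p_t$ by the part semigroups $p_{t}^{U}$ --- on which one has honest ultracontractivity, hence compactness (Remark~\ref{re}(iv)) --- via Dynkin's formula as in Proposition~\ref{prop:on}, and identify the $L^{1}$-eigenfunctions furnished by Corollary~\ref{th:2} with $\phi$. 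Once $\phi\in L^{1}(E,\mu)\cap L^{\infty}(E,\mu)$ is secured, the consistency argument above closes the proof.
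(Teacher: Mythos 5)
Your treatment of part (i) is correct and is essentially the paper's own argument: the paper proves Theorem~\ref{th:3} by combining the endpoint compactness results (Theorem~\ref{th:1} and Corollary~\ref{th:2}) with \cite[Theorem~1.6.4]{Da}, whose compactness assertion is exactly the interpolation of one-sided compactness (Krasnosel'ski\u\i-type) that you invoke.

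The gap is in part (ii). You reduce the matching of spectra across $p$ to the smoothing property $p_t\bigl(L^{p}(E,\mu)\bigr)\subset L^{1}(E,\mu)\cap L^{\infty}(E,\mu)$, and, as you yourself note, nothing in conditions I--III supplies this: condition~III is purely local, and global ultracontractivity (let alone $p_t(L^{\infty})\subset L^{1}$) is not available. Your two patches do not close this hole. First, the appeal to \cite{T3} for bounded continuous versions of eigenfunctions is not licensed here: those results are proved for Class~(T), which includes irreducibility, an assumption Theorem~\ref{th:3} does not make (in Remark~\ref{re}~(ii) irreducibility is explicitly an \emph{additional} hypothesis). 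Second, and more seriously, the step ``identify the $L^{1}$-eigenfunctions furnished by Corollary~\ref{th:2} with $\phi$'' is precisely the assertion to be proved: a priori the point spectrum of the compact operator $p_t$ on $L^{1}(E,\mu)$ need not have anything to do with its point spectrum on $L^{2}(E,\mu)$, and exhausting $E$ by finite-measure sets via Dynkin's formula does not produce the identification (under condition~III the part semigroups need not be ultracontractive either, so the same difficulty recurs for them). The missing idea --- and the actual content of the cited \cite[Theorem~1.6.4]{Da} --- is that once (i) is known, \emph{no smoothing is needed at all}: for a consistent family of operators compact on every $L^{p}(E,\mu)$, $1\le p\le\infty$, the resolvents $(z-p_t)^{-1}$ on $L^{p}$ and $L^{q}$ agree on $L^{p}\cap L^{q}$ for $|z|>1$ by the Neumann series, hence by analytic continuation on all of $\mathbb{C}\setminus\bigl(\sigma_{L^{p}}(p_t)\cup\sigma_{L^{q}}(p_t)\cup\{0\}\bigr)$, which is connected because both spectra are countable with $0$ as their only accumulation point. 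Equality of the resulting Riesz projections around any nonzero eigenvalue forces the nonzero eigenvalues, their multiplicities, and their finite-dimensional eigenspaces to coincide for all $p$; in particular every eigenfunction of $(\mathcal{L}^2,D(\mathcal{L}^2))$ lies in the range of a finite-rank projection acting consistently on every $L^{p}(E,\mu)$, hence in $L^{p}(E,\mu)$ for all $1\le p\le\infty$. This purely functional-analytic argument replaces both of your patches and uses no hypotheses beyond those of the theorem.
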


\section{Proof of Theorem~\ref{th:1}}
Since $E$ is a locally compact separable metric space, there exist increasing bounded open subsets $\{U_n\}_{n=1}^{\infty} $ and compact subsets $\{K_n\}_{n=1}^{\infty}$ such that for any $n \in \N$, $K_n \subset U_n \subset K_{n+1}$ and $E=\bigcup_{n=1}^{\infty}U_n=\bigcup_{n=1}^{\infty}K_n$. We write $\tau_n$ for $\tau_{U_n}$. The semigroup of the part process of $X$ on $U_n$ is simply denoted by $\{p_{t}^n\}_{t>0}$. 

The quasi-left continuity of $X$ yields the next lemma.
\begin{lemma}\label{lem:ql}
For any $x \in E$, $P_{x}(\lim_{n \to \infty}\tau_{n}=\zt)=1$.
\end{lemma}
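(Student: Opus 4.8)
The plan is to exploit the monotonicity of the exit times together with the quasi-left continuity of the Hunt process $X$. Since $U_n \subset U_{n+1}$, the exit times $\tau_n$ form a non-decreasing sequence, so the limit $\tau_\infty := \lim_{n\to\infty}\tau_n$ exists in $[0,\infty]$ $P_x$-almost surely. One inequality is immediate: for every $n$ and every $t \ge \zt$ we have $X_t = \partial \notin U_n$ (because $U_n \subset E$), so $\tau_n \le \zt$, and hence $\tau_\infty \le \zt$. It therefore remains to rule out the event $\{\tau_\infty < \zt\}$, i.e.\ to show $P_x(\tau_\infty < \zt) = 0$.

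For the reverse inequality I would argue on the event $A = \{\tau_\infty < \zt\}$. On $A$ the limit time is finite and $X_{\tau_\infty} \in E$. Applying quasi-left continuity to the increasing sequence $\tau_n \uparrow \tau_\infty$ yields $X_{\tau_n} \to X_{\tau_\infty}$ $P_x$-a.s.\ on $A$. Since $E = \bigcup_m U_m$ with each $U_m$ open, there is an ($\omega$-dependent) index $m$ with $X_{\tau_\infty} \in U_m$; by the convergence and the openness of $U_m$ there is $N \ge m$ with $X_{\tau_n} \in U_m \subset U_n$ for all $n \ge N$. This contradicts the fact that the process has already left $U_n$ by time $\tau_n$, that is, $X_{\tau_n} \notin U_n$. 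Hence $A$ coincides, up to a $P_x$-null set, with the empty set, giving $P_x(\tau_\infty < \zt)=0$ and therefore $\tau_\infty = \zt$ $P_x$-a.s.

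The two facts I would isolate as small preliminary steps are (a) that $X_{\tau_n} \notin U_n$ on $\{\tau_n < \infty\}$ and (b) the precise form of quasi-left continuity near the lifetime. For (a): for $n$ large enough $x \in U_n$, so $\tau_n > 0$; right continuity of the paths forces $X_t \in U_n$ for all $t < \tau_n$, and if one had $X_{\tau_n} \in U_n$, then, $U_n$ being open, $X$ would remain in $U_n$ on a right neighbourhood of $\tau_n$, contradicting the definition of $\tau_n$ as the first exit time. Fact (b) is the genuine obstacle: the defining quasi-left continuity of a Hunt process provides $X_{T_n} \to X_T$ only on $\{T < \zt\}$, which is precisely why the argument must be carried out on $\{\tau_\infty < \zt\}$ and cannot be extended to all of $\{\tau_\infty < \infty\}$. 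Keeping the bookkeeping of the lifetime consistent, and checking that the exceptional null sets (one from quasi-left continuity, and the countable union of those where $X_{\tau_n} \notin U_n$ might fail) assemble into a single $P_x$-null set, is the delicate point; the covering-and-openness step is then routine once the convergence $X_{\tau_n}\to X_{\tau_\infty}$ is in hand.
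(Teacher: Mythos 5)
Your proof is correct and follows exactly the route the paper intends: the paper states this lemma with no written proof beyond the remark that it is a consequence of quasi-left continuity, and your argument (monotone exit times, quasi-left continuity applied on $\{\lim_n \tau_n < \zeta\}$, and the covering/openness contradiction with the fact that $X_{\tau_n} \notin U_n$ on $\{\tau_n < \infty\}$) is the standard way of filling in those details. Your isolation of fact (b) — that quasi-left continuity only gives convergence strictly before the lifetime, which is precisely why the argument must be run on $\{\lim_n \tau_n < \zeta\}$ — correctly identifies the one delicate point.
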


The following formula is called Dynkin's formula.
\begin{lemma}\label{lem:d}
It holds that
\begin{equation*}
p_{t}f(x)=p_{t}^{U}f(x)+E_{x}[p_{t-\tau_U}f(X_{\tau_U}): \tau_U \le t]
\end{equation*}
for any $x \in E$, $f \in \mathcal{B}_{b}(E)$, $t>0$, and any open subset $U$ of $E$.
\end{lemma}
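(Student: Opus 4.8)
The plan is to obtain Dynkin's formula directly from the strong Markov property of the Hunt process $X$ at the exit time $\tau_U$, exploiting the bookkeeping convention $f(\partial)=0$ (so that $p_tf(x)=E_x[f(X_t)]$ already encodes the restriction $t<\zt$, and $p_sf(\partial)=0$ for every $s>0$). First I would note that $\tau_U$ is a stopping time of (the usual augmentation of) the filtration of $X$, since $U$ is open; this is what legitimizes the conditioning used below. Then I would split the expectation over the complementary events $\{\tau_U>t\}$ and $\{\tau_U\le t\}$, writing
\begin{equation*}
p_tf(x)=E_x[f(X_t)]=E_x[f(X_t):\tau_U>t]+E_x[f(X_t):\tau_U\le t].
\end{equation*}

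For the first summand I would use that the part process $X^U$ coincides with $X$ before $\tau_U$: on $\{\tau_U>t\}$ one has $X_t^U=X_t$, while on $\{\tau_U\le t\}$ one has $X_t^U=\partial$, so $f(X_t^U)=0$. Hence $E_x[f(X_t):\tau_U>t]=E_x[f(X_t^U)]=p_t^Uf(x)$, which is exactly the first term of the asserted identity. The substantive step is the second summand. On the event $\{\tau_U\le t\}$ I would use the additivity of the time shift $\theta$ to write $X_t=X_{t-\tau_U}\circ\theta_{\tau_U}$, and then apply the strong Markov property at $\tau_U$, conditioning on $\mathcal{F}_{\tau_U}$, to get
\begin{equation*}
E_x[f(X_t):\tau_U\le t]=E_x\bigl[E_{X_{\tau_U}}[f(X_{t-\tau_U})]:\tau_U\le t\bigr]=E_x[p_{t-\tau_U}f(X_{\tau_U}):\tau_U\le t].
\end{equation*}
Summing the two contributions yields the claim.

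I expect the main points requiring care to be measure-theoretic rather than conceptual. The first is the correct use of the strong Markov property when the random time $t-\tau_U$ appears \emph{inside} the shifted functional: I would handle this by noting that $\varphi(s,y)=p_sf(y)=E_y[f(X_s)]$ is jointly measurable in $(s,y)$, so that $E_x[(f(X_{t-\tau_U})\circ\theta_{\tau_U})\mid\mathcal{F}_{\tau_U}]=\varphi(t-\tau_U,X_{\tau_U})=p_{t-\tau_U}f(X_{\tau_U})$ since both $t-\tau_U$ and $X_{\tau_U}$ are $\mathcal{F}_{\tau_U}$-measurable on $\{\tau_U\le t\}$. The second is the cemetery bookkeeping in the degenerate case $\tau_U=\zt$, i.e.\ when $X$ is killed inside $U$ without reaching $\partial U$: there $X_{\tau_U}=\partial$, and the conventions $f(\partial)=0$ and $p_sf(\partial)=0$ ensure such trajectories contribute nothing to either side, so the identity is unaffected. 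These are standard facts for Hunt processes, and once they are invoked the formula follows from the two displayed computations.
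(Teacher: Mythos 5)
Your proposal is correct, and its skeleton coincides with the paper's: both split $p_tf(x)=E_x[f(X_t):\tau_U>t]+E_x[f(X_t):\tau_U\le t]$, identify the first term with $p_t^Uf(x)$ via the convention $f(\partial)=0$, and then convert the second term using the strong Markov property at $\tau_U$. The genuine difference is in how that conversion is justified. As you yourself flag, the functional $f(X_{t-\tau_U})\circ\theta_{\tau_U}$ is not a function of the shifted path alone, because the time $t-\tau_U$ is $\mathcal{F}_{\tau_U}$-measurable; so the ordinary strong Markov property does not apply verbatim, and what you actually invoke is its extended (parametrized) form, valid for functionals $H(\omega,\omega')$ that are $\mathcal{F}_{\tau_U}\otimes\mathcal{F}_{\infty}$-measurable, applied with $H(\omega,\omega')=\bone_{\{\tau_U(\omega)\le t\}}f\bigl(X_{t-\tau_U(\omega)}(\omega')\bigr)$. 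For a Hunt process this extended form is standard (it follows from the ordinary one by a monotone class argument), so citing it is legitimate, but note that stating joint measurability of $(s,y)\mapsto p_sf(y)$ only makes the identity meaningful; it is not by itself a proof of the conditional-expectation step. The paper, by contrast, proves exactly this special case by hand: it approximates $t-\tau_U$ from above by dyadic times $s_n$, applies the ordinary strong Markov property on each event $\{(k-1)/2^n\le t-\tau_U<k/2^n\}$, where the shifted time $k/2^n$ is deterministic, and then lets $n\to\infty$. Your route is shorter and avoids the paper's limiting step (which tacitly uses right-continuity of the paths and of $s\mapsto p_sf$ along the approximation, an issue your argument never meets); the paper's route is self-contained, requiring only the ordinary strong Markov property. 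To make your write-up equally self-contained you would either supply the monotone class argument behind the extended strong Markov property or reproduce the discretization.
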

\begin{proof}
It is easy to see that
\begin{equation}
p_{t}f(x)=p_{t}^{U}f(x)+E_{x}[f(X_t):\tau_U \le t ].\label{eq:eq1}
\end{equation}
Let $n \in \N$. On $\{\tau_U \le t\}$, we define $s_n$ by 
\begin{equation*}
s_n|_{\{(k-1)/2^n \le t-\tau_U <k/2^n\}}=k/2^n,\quad k \in \N.
\end{equation*}
We note that $\lim_{n \to \infty}s_n=t-\tau_U$. By the strong Markov property of $X$, 
\begin{align}
E_{x}[f(X_{\tau_U+s_n}):\tau_U \le t]&=\sum_{k=1}^{\infty}E_{x}[f(X_{\tau_U+k/2^n}):(k-1)/2^n \le t-\tau_U <k/2^n] \notag \\
&=\sum_{k=1}^{\infty}E_{x}[E_{X_{\tau_U}}[f(X_{k/2^n})]:(k-1)/2^n \le t-\tau_U <k/2^n] \notag \\
&=E_{x}[p_{s_n}f(X_{\tau_U}): \tau_U \le t]\label{eq:eqd}.
\end{align}
Letting $n \to \infty$ in \eqref{eq:eqd}, we obtain 
\begin{equation}
E_{x}[f(X_{t}):\tau_U \le t]=E_{x}[p_{t-\tau_U}f(X_{\tau_U}): \tau_U \le t] \label{eq:eq2}
\end{equation}
Combining \eqref{eq:eq1} with \eqref{eq:eq2}, we complete the proof.
\end{proof}
By using Dynkin's formula and the semigroup strong Feller property, we obtain the next lemma.
\begin{lemma}\label{lem:uni}
Let $K$ be a compact subset of $E$. Then, for any $t>0$ and a nonegative $f \in \mathcal{B}_{b}(E)$,
$$
\lim_{n \to \infty}\sup_{x \in K} E_{x}[p_{t-\tau_n}f(X_{\tau_n}): \tau_n \le t]=0.
$$
\end{lemma}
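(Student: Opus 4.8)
The plan is to recognize the target expectation as the gap between the full semigroup and the part semigroup, and then to upgrade pointwise convergence to uniform convergence on $K$ by invoking Dini's theorem. Concretely, I would first apply Dynkin's formula (Lemma~\ref{lem:d}) with $U=U_n$ to rewrite, for every $x \in E$,
\[
E_{x}[p_{t-\tau_n}f(X_{\tau_n}):\tau_n \le t]=p_{t}f(x)-p_{t}^{n}f(x).
\]
Since $\tau_n \le \zt$, unwinding the definitions gives $p_{t}f(x)-p_{t}^{n}f(x)=E_{x}[f(X_t):\tau_n \le t<\zt]\ge 0$. Because $K$ is compact and the $U_n$ increase to $E$, there is an $N$ with $K \subset U_n$ for all $n \ge N$, so it suffices to study the functions $g_n:=p_{t}f-p_{t}^{n}f$ on $K$ for $n \ge N$, and to show $\sup_{x \in K}g_n(x) \to 0$.

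The core of the argument is to verify the hypotheses of Dini's theorem for the sequence $\{g_n\}$ on $K$. For continuity, $p_{t}f \in C_{b}(E)$ by the semigroup strong Feller property (condition~I), while the part semigroup is strong Feller by the doubly-Feller consequence recorded in Remark~\ref{re}~(iii), so $p_{t}^{n}f \in C_{b}(U_n)$; hence each $g_n$ is continuous on $U_n \supset K$. For monotonicity, the inclusion $U_n \subset U_{n+1}$ gives $\tau_n \le \tau_{n+1}$, and since $f \ge 0$ this yields $p_{t}^{n}f \le p_{t}^{n+1}f$ pointwise, so $\{g_n\}$ is nonincreasing. Finally, for the pointwise limit, Lemma~\ref{lem:ql} gives $\tau_n \uparrow \zt$ almost surely under $P_x$, so $\bone_{\{\tau_n \le t<\zt\}} \to 0$ pointwise $P_x$-a.s.; dominated convergence (using boundedness of $f$) then gives $g_n(x) \to 0$ for each fixed $x$. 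With a nonincreasing sequence of continuous functions converging pointwise to the continuous limit $0$ on the compact set $K$, Dini's theorem delivers the uniform convergence $\sup_{x \in K}g_n(x) \to 0$, which is exactly the claim.

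The main obstacle is precisely the uniformity over $x \in K$: the dominated-convergence step only produces convergence at each individual point, and on its own it says nothing uniform. What rescues the argument is the combination of the continuity of both $p_{t}f$ and the part semigroups $p_{t}^{n}f$ (furnished by condition~I and Remark~\ref{re}~(iii)) together with the monotonicity that the hypothesis $f \ge 0$ guarantees; these are exactly the ingredients Dini's theorem needs to convert pointwise into uniform convergence. I would therefore expect the only delicate points to be confirming that the part semigroups are genuinely continuous on $U_n$ (so that Dini applies on $K \subset U_n$) and that the monotonicity indeed forces $g_n$ to decrease, both of which follow cleanly once $f \ge 0$ is used.
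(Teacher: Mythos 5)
Your proposal is correct and follows essentially the same route as the paper's proof: rewrite the expectation as $p_{t}f - p_{t}^{n}f$ via Dynkin's formula, establish continuity from condition~I and Remark~\ref{re}~(iii), monotonicity from $f \ge 0$ and $U_n \subset U_{n+1}$, pointwise convergence to zero from Lemma~\ref{lem:ql}, and conclude by Dini's theorem. The only cosmetic difference is that the paper normalizes $K \subset U_1$ at the outset rather than passing to $n \ge N$ with $K \subset U_n$.
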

\begin{proof}
We may assume $K \subset U_1$. By the condition~I and Remark~\ref{re}~(iii), both $p_tf$ and $p_{t}^nf$ are continuous on $K$. Hence, we see from Dynkin's formula (Lemma~\ref{lem:d}) that
\begin{equation}
E_{x}[p_{t-\tau_n}f(X_{\tau_n}): \tau_n \le t]=p_{t}f(x)-p_{t}^{n}f(x) \label{eq:1}
\end{equation}
is continuous on $K$. For any $t>0$ and $x \in E$, $p_{t}^{n}f(x) \le p_{t}^{n+1}f(x)$. Hence, (LHS) of \eqref{eq:1} is non-increasing in $n$. By Lemma~\ref{lem:ql}, (LHS) of \eqref{eq:1} converges to 
\begin{align*}
&\lim_{n \to \infty}E_{x}[p_{t-\tau_n}f(X_{\tau_n}): \tau_n \le t]=\lim_{n \to \infty}(p_{t}f(x)-p_{t}^{n}f(x)) \\
&=\lim_{n \to \infty}E_{x}[f(X_t):t \ge \tau_n] =E_{x}[f(X_t):t \ge \zt] \\
&=E_{x}[f(\partial ):t \ge \zt]=0,
\end{align*}
and the proof is complete by Dini's theorem.
\end{proof}

For each $n \in \N$ and $t>0$, we define the operator $T_{n,t}$ on $L^{\infty}(E,\mu)$ by
$$L^{\infty}(E,\mu) \ni f \mapsto  E_{(\cdot)}[p_{t-\tau_n}f(X_{\tau_n}): \tau_n\le t].$$ 
The operator norm of $T_{n,t}$ is estimated as follows.

\begin{prop}\label{prop:on}
Let $n ,m \in \N$ with $m<n$. Then, for any $t>0$,
\begin{align*}
&\| T_{n,t} \|_{L^{\infty}(E,\mu) \to L^{\infty}(E,\mu)} \\
&\le \sup_{x \in K_m}E_{x}[p_{t-\tau_n}\bone_{E}(X_{\tau_n}):\tau_n \le t] + (4/t) \times \sup_{x \in E \setminus K_m}E_{x}[\zt].
\end{align*}
Here, $\|\cdot\|_{L^{\infty}(E,\mu) \to L^{\infty}(E,\mu)}$ denotes the operator norm from $L^{\infty}(E,\mu)$ to itself.
\end{prop}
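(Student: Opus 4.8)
The plan is to reduce the operator-norm estimate to a pointwise bound on $T_{n,t}\bone_{E}$, and then to control that bound by splitting both according to the starting point and according to the size of the exit time $\tau_n$.

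First I would observe that it suffices to bound $T_{n,t}\bone_E$. Indeed, for $f \in L^{\infty}(E,\mu)$ with $\|f\|_{L^{\infty}}\le 1$ we have $|f|\le 1$ $\mu$-a.e., and since by Remark~\ref{re}~(i) the semigroup has a density with respect to $\mu$, it follows that $|p_{t-\tau_n}f(y)|\le p_{t-\tau_n}\bone_E(y)$ for every $y\in E_{\partial}$ (both sides vanishing at $\partial$). Hence $|T_{n,t}f(x)|\le E_{x}[p_{t-\tau_n}\bone_E(X_{\tau_n}):\tau_n\le t]=T_{n,t}\bone_E(x)$ for every $x$, so that
\[
\| T_{n,t} \|_{L^{\infty}(E,\mu) \to L^{\infty}(E,\mu)}\le \esssup_{x} T_{n,t}\bone_E(x)\le \sup_{x\in E} T_{n,t}\bone_E(x).
\]
Splitting the supremum over the starting point as $\sup_{x\in E}=\max\!\big(\sup_{x\in K_m},\sup_{x\in E\setminus K_m}\big)\le \sup_{x\in K_m}+\sup_{x\in E\setminus K_m}$, valid since $T_{n,t}\bone_E\ge 0$, already produces the first term of the asserted bound. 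It then remains to prove the pointwise estimate $T_{n,t}\bone_E(x)\le (4/t)\,E_{x}[\zt]$ for every $x$.

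For this I would like to use $p_{s}\bone_E(y)=P_{y}(\zt>s)\le E_{y}[\zt]/s$ (Markov's inequality), but on $\{\tau_n\le t\}$ the residual time $t-\tau_n$ can be arbitrarily small, so this cannot be applied directly; the remedy, and the technical heart of the proof, is to split $\{\tau_n\le t\}$ at $t/2$. On $\{\tau_n\le t/2\}$ one has $t-\tau_n\ge t/2$, so $p_{t-\tau_n}\bone_E(X_{\tau_n})\le (2/t)\,E_{X_{\tau_n}}[\zt]$, and the strong Markov property together with the additivity $\zt=\tau_n+\zt\circ\theta_{\tau_n}$ gives
\[
E_{x}\big[E_{X_{\tau_n}}[\zt]:\tau_n\le t/2\big]=E_{x}[\zt-\tau_n:\tau_n\le t/2]\le E_{x}[\zt],
\]
so this part contributes at most $(2/t)E_{x}[\zt]$. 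On $\{t/2<\tau_n\le t\}$ I would bound $p_{t-\tau_n}\bone_E\le 1$ and use $\tau_n\le\zt$, giving $P_{x}(\tau_n>t/2)\le P_{x}(\zt>t/2)\le (2/t)E_{x}[\zt]$. Adding the two contributions yields $T_{n,t}\bone_E(x)\le (4/t)E_{x}[\zt]$, whence $\sup_{x\in E\setminus K_m}T_{n,t}\bone_E(x)\le (4/t)\sup_{x\in E\setminus K_m}E_{x}[\zt]$, which is the second term.

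The step I expect to be the main obstacle is precisely the region $\{\tau_n\approx t\}$: a naive application of Markov's inequality with the residual time $t-\tau_n$ produces a factor $1/(t-\tau_n)$ that blows up, so the split at $t/2$ is essential, and it is this split — one half handled via Markov on $\zt$ and the strong Markov identification of the residual lifetime $\zt-\tau_n$, the other via the crude bound $p_s\bone_E\le 1$ and $\tau_n\le\zt$ — that is responsible for the constant $4$. Everything else is routine: the reduction to $\bone_E$, the harmless vanishing of all quantities at $\partial$, and the immediate passage from the essential to the pointwise supremum.
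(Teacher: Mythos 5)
Your proof is correct, and its skeleton is the same as the paper's: reduce to $f=\bone_E$ (using the absolute continuity of the kernel from Remark~\ref{re}~(i)), split the supremum over starting points into $K_m$ and $E\setminus K_m$, split the event $\{\tau_n\le t\}$ at $t/2$, and apply Markov's inequality twice to get the constant $4=2+2$. The genuine difference is in the term over $\{\tau_n\le t/2\}$. The paper uses no Markov property there: it notes that on this event $t-\tau_n\in[t/2,t]$ and that the exit position satisfies $X_{\tau_n}\in (E\setminus U_n)\cup\{\partial\}\subset (E\setminus K_m)\cup\{\partial\}$ --- this is exactly where the hypothesis $m<n$ enters --- so that $p_{t-\tau_n}\bone_E(X_{\tau_n})\le \sup_{y\in E\setminus K_m}\sup_{s\in[t/2,t]}p_s\bone_E(y)\le (2/t)\sup_{y\in E\setminus K_m}E_y[\zt]$; the supremum is transferred to the \emph{exit point} rather than kept at the starting point. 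You instead invoke the strong Markov property and the additivity $\zt=\tau_n+\zt\circ\theta_{\tau_n}$ to write $E_x\bigl[E_{X_{\tau_n}}[\zt]:\tau_n\le t/2\bigr]=E_x[\zt-\tau_n:\tau_n\le t/2]\le E_x[\zt]$, keeping everything at the \emph{starting point} $x$. What your route buys: the hypothesis $m<n$ is never used, and you obtain the stronger pointwise estimate $T_{n,t}\bone_E(x)\le (4/t)E_x[\zt]$ for every $x\in E$, from which the stated operator bound follows for arbitrary $m,n\in\N$. What the paper's route buys: it needs nothing beyond the trivial observation of where $X_{\tau_n}$ lands (no strong Markov property, no identification of the residual lifetime), at the price of the geometric condition $K_m\subset U_n$. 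Both arguments yield the same constant and serve equally well in the proof of Theorem~\ref{th:1}.
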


\begin{proof}
Let $f \in L^{\infty}(E,\mu)$ with $\|f\|_{L^{\infty}(E,\mu)}=1$. Then, we have
\begin{align*}
&\|E_{(\cdot)}[p_{t-\tau_n}f(X_{\tau_n}): \tau_n \le t]\|_{L^{\infty}(E,\mu)} \\
&\le \|f\|_{L^{\infty}(E,\mu)} \times  \esssup_{x \in E}E_{x}[p_{t-\tau_n}\bone_{E}(X_{\tau_n}):\tau_n \le t]\\ 
&\le \esssup_{x \in K_m}E_{x}[p_{t-\tau_n}\bone_{E}(X_{\tau_n}):\tau_n\le t]+\esssup_{x \in E \setminus K_m}E_{x}[p_{t-\tau_n}\bone_{E}(X_{\tau_n}):  t/2 < \tau_n \le t] \\
&\quad +\esssup_{x \in E \setminus K_m}E_{x}[p_{t-\tau_n}\bone_{E}(X_{\tau_n}): \tau_n \le t/2] \\
&\le \sup_{x \in K_m}E_{x}[p_{t-\tau_n}\bone_{E}(X_{\tau_n}):\tau_n \le t]+\sup_{x \in E \setminus K_m}P_{x}(t/2<\tau_n) \\
&\quad+ \sup_{x \in E \setminus K_m}\sup_{s \in [t/2,t]}p_{s}\bone_{E}(x).
\end{align*}
Here, $\esssup$ denotes the essential supremum with respect to $\mu$. 
Moreover, we see $P_{x}(t/2<\tau_n) \le P_{x}(t/2<\zt)\le (2/t) \times E_{x}[\zt]$ and $$p_{s}\bone_{E}(x)=P_{x}(X_s \in E)=P_{x}(s<\zt)\le (1/s)\times E_{x}[\zt].$$ 
Combining these estimates, we obtain
 the following estimate
\begin{align*}
&\| E_{(\cdot)}[p_{t-\tau_n}f(X_{\tau_n}): \tau_n \le t] \|_{L^{\infty}(E,\mu)}\\
&\le \sup_{x \in K_m}E_{x}[p_{t-\tau_n}\bone_{E}(X_{\tau_n}): \tau_n \le t] + (4/t) \times \sup_{x \in E \setminus K_m}E_{x}[\zt].
\end{align*}
\end{proof}
Let $X^{(1)}$ be the $1$-subprocess of $X$. Namely, $X^{(1)}=(\{X_{t}^{(1)}\}_{t \ge 0}, \{P_{x}^{(1)}\}_{x \in E}, \zt^{(1)})$ is the $\mu$-symmetric Hunt process on $E$ whose semigroup $\{p_{t}^{(1)}\}_{t \ge 0}$ is given by 
\begin{equation*}
p_{t}^{(1)}f(x):=E_{x}^{(1)}[f(X_{t}^{(1)})]=E_{x}[e^{-t}f(X_t)],\quad t>0,\ x \in E,\ f \in \mathcal{B}_{b}(E),
\end{equation*}
where $E_{x}^{(1)}$ is the expectation with respect to $P_{x}^{(1)}$. For each $n \in \N$, we denote by $X^{(1),n}$ the part process of $X^{(1)}$ on $U_n$. The semigroup is denoted by $\{p_{t}^{(1),n}\}_{t \ge 0}$. It is easy to see 
\begin{equation}
p_{t}^{(1)}f(x)-p_{t}^{(1),n}f(x)=e^{-t}(p_{t}f(x)-p_{t}^{n}f(x)) \label{eq:eqcommute}
\end{equation}
 for any $t>0$, $x \in E$, $f \in \mathcal{B}_{b}(E)$, and $n \in \N$. 
 
 For each $n \in \N$ and $t>0$, we define the operator $T_{n,t}^{(1)}$ on $L^{\infty}(E,\mu)$ by 
 $$L^{\infty}(E,\mu) \ni f \mapsto  E_{(\cdot)}^{(1)}[p_{t-\tau_{n}'}^{(1)}f(X_{\tau_{n}'}^{(1)}): \tau_{n}'\le t],$$
 where we define $\tau_{n}'=\inf \{t >0 \mid X_{t}^{(1)} \notin U_n \}$. By  using \eqref{eq:eqcommute} and applying Lemma~\ref{lem:d} to $X$ and $X^{(1)}$, we have 
\begin{align}
&T_{n,t}^{(1)}f(x)=p_{t}^{(1)}f(x)-p_{t}^{(1),n}f(x)=e^{-t}(p_{t}f(x)-p_{t}^nf(x)) \notag \\
&=e^{-t} \times E_{x}[p_{t-\tau_n}f(X_{\tau_n}): \tau_n \le t]=e^{-t} \times T_{n,t}f(x) \label{eq:eqcommute2}
\end{align}
for any $t>0$, $n \in \N$, $x \in E$ and $f \in \mathcal{B}_{b}(E)$. By using \eqref{eq:eqcommute2} and Lemma~\ref{lem:uni}, we obtain the next lemma.
\begin{lemma}\label{lem:commute}
\begin{itemize}
\item[(i)]
It holds that
$$
\lim_{n \to \infty}\sup_{x \in K} T_{n,t}^{(1)}f(x)=0
$$
for any compact subset $K \subset E$, $t>0$ and nonegative $f \in \mathcal{B}_{b}(E)$. 
\item[(ii)] It holds that
\begin{equation*}
\|T_{n,t}\|_{L^{\infty}(E,\mu) \to L^{\infty}(E,\mu)}=e^{t} \times \|T_{n,t}^{(1)}\|_{L^{\infty}(E,\mu) \to L^{\infty}(E,\mu)}
\end{equation*}
for any $t>0$ and $n \in \N$.
\end{itemize}
\end{lemma}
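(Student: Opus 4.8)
The plan is to derive both assertions directly from the pointwise identity \eqref{eq:eqcommute2}, namely $T_{n,t}^{(1)}f(x)=e^{-t}T_{n,t}f(x)$ for all $t>0$, $n\in\N$, $x\in E$ and $f\in\mathcal{B}_{b}(E)$, together with the definition $T_{n,t}f(x)=E_{x}[p_{t-\tau_n}f(X_{\tau_n}):\tau_n\le t]$. The point is that the factor $e^{-t}$ is a harmless positive constant, so that the qualitative behaviour of $T_{n,t}^{(1)}$ is inherited from that of $T_{n,t}$, which was already analysed in Lemma~\ref{lem:uni} and Proposition~\ref{prop:on}.

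For part (i), I would first rewrite, using \eqref{eq:eqcommute2} and the definition of $T_{n,t}$,
\[
\sup_{x\in K}T_{n,t}^{(1)}f(x)=e^{-t}\sup_{x\in K}E_{x}[p_{t-\tau_n}f(X_{\tau_n}):\tau_n\le t].
\]
Since $f\in\mathcal{B}_{b}(E)$ is nonnegative and $K$ is compact, Lemma~\ref{lem:uni} gives that the supremum on the right-hand side tends to $0$ as $n\to\infty$; multiplying by the fixed constant $e^{-t}$ preserves this, which is exactly the claim.

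For part (ii), the identity $T_{n,t}f=e^{t}T_{n,t}^{(1)}f$ from \eqref{eq:eqcommute2} holds pointwise for every bounded Borel function $f$. Each class in $L^{\infty}(E,\mu)$ admits a bounded Borel representative, so this equality descends to an identity of elements of $L^{\infty}(E,\mu)$; consequently $\|T_{n,t}f\|_{L^{\infty}(E,\mu)}=e^{t}\|T_{n,t}^{(1)}f\|_{L^{\infty}(E,\mu)}$ for each $f$. Taking the supremum over $f$ in the unit ball of $L^{\infty}(E,\mu)$ then yields the stated equality of operator norms.

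The argument is essentially a formal consequence of \eqref{eq:eqcommute2}, so I do not anticipate a serious obstacle. The only point requiring a little care is the passage in part (ii) from the pointwise identity valid on $\mathcal{B}_{b}(E)$ to the corresponding identity of operators on $L^{\infty}(E,\mu)$; this hinges on the fact that every $L^{\infty}$-class contains a bounded Borel representative (so that restricting to $\mathcal{B}_{b}(E)$ loses nothing when computing the operator norm) and on the well-definedness of $T_{n,t}$ and $T_{n,t}^{(1)}$ as maps on $L^{\infty}(E,\mu)$, which rests on the absolute continuity recorded in Remark~\ref{re}~(i).
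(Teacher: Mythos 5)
Your proposal is correct and follows exactly the route the paper intends: the paper derives Lemma~\ref{lem:commute} directly from the pointwise identity \eqref{eq:eqcommute2} combined with Lemma~\ref{lem:uni}, which is precisely your argument for (i) and (ii). Your additional remark about passing from $\mathcal{B}_{b}(E)$ to $L^{\infty}(E,\mu)$-classes via bounded Borel representatives and the absolute continuity in Remark~\ref{re}~(i) is a legitimate point of care that the paper leaves implicit.
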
 
\begin{proof}[Proof of Theorem~\ref{th:1}]
By the condition~III, each $p_{t}^n$ is regarded as a compact operator on $L^{\infty}(E,\mu)$. Therefore it is sufficient to prove $$\lim_{n \to \infty}\| p_{t}-p_{t}^n \|_{L^{\infty}(E,\mu) \to L^{\infty}(E,\mu)}=0.$$ Lemma~\ref{lem:d} lead us to that for any $n \in \N$ and $t>0$
\begin{align}
&\| p_{t}-p_{t}^n  \|_{L^{\infty}(E,\mu) \to L^{\infty}(E,\mu)} \notag \\
&= \sup_{f \in L^{\infty}(E,\mu),\ \|f\|_{L^{\infty}(E,\mu)}=1}  \| E_{(\cdot)}[p_{t-\tau_n}f(X_{\tau_n}): \tau_n \le t] \|_{L^{\infty}(E,\mu)} \notag \\
&=\|T_{n,t}\|_{L^{\infty}(E,\mu) \to L^{\infty}(E,\mu)}. \label{eq:commute3}
\end{align}
It holds that $E_{x}^{(1)}[\zt^{(1)}]=R_{1}\bone_{E}(x)$ for any $x \in E$. Applying Proposition~\ref{prop:on} to $X^{(1)}$, we have
\begin{align}
&\|T_{n,t}^{(1)}\|_{L^{\infty}(E,\mu) \to L^{\infty}(E,\mu)}  \notag \\
&\le \sup_{x \in K_m}E_{x}^{(1)}[p_{t-\tau_{n}'}^{(1)}\bone_{E}(X_{\tau_{n}'}^{(1)}):\tau_{n}' \le t] + (4/t) \times \sup_{x \in E \setminus K_m}E_{x}^{(1)}[\zt^{(1)}] \notag \\
&=  \sup_{x \in K_m}T_{n,t}^{(1)}\bone_{E}(x) + (4/t) \times \sup_{x \in E \setminus K_m}R_{1}\bone_{E}(x). \label{eq:commute4}
\end{align}
Combining \eqref{eq:commute3}, \eqref{eq:commute4} and Lemma~\ref{lem:commute}~(ii), we have
\begin{align*}
&\| p_{t}-p_{t}^n  \|_{L^{\infty}(E,\mu) \to L^{\infty}(E,\mu)} \le e^{t} \times \left\{   \sup_{x \in K_m}T_{n,t}^{(1)}\bone_{E}(x) + (4/t) \times \sup_{x \in E \setminus K_m}R_{1}\bone_{E}(x) \right\}.
\end{align*}
Letting $n \to \infty$ and then $m \to \infty$, the proof is complete by Lemma~\ref{lem:commute}~(i).
\end{proof}

\section{Examples}
\begin{example}
Let $\alpha \in (0,2]$ and $X$ be the rotationally symmetic $\alpha$-stable process on $\R^d$. If $\alpha=2$, $X$ is identified with the $d$-dimensional Brownian motion. Let $D \subset \R^d$ be an open subset of $\R^d$ and $X^{D}$ be the $\alpha$-stable process on $D$ with Dirichlet boundary condition. Since $X$ is semigroup doubly Feller in the sense of \cite{CK}, the condition I is satisfied for $X^D$. Since the semigroup of $X$ is ultracontractive, so is the semigroup of $X^D$. Thus, the condition III is also satisfied. It is shown in \cite[Lemma~1]{KM} that the semigroup of $X^D$ is a compact operator on $L^{2}(D,m)$ if and only if $\lim_{|x| \to \infty}E_{x}[\tau_D]=0$.

Hence, by using Theorem~\ref{th:2} and Theorem~\ref{th:3}, we obtain the next theoem.
\end{example}
\begin{theorem}\label{th:4} The following are equivalent:
\begin{itemize}
\item[(i)] for any $1\le p \le \infty$, the semigroup of $X^D$ is a compact operator on $L^{p}(D,m)$;
\item[(ii)] the semigroup of $X^D$ is a compact operator on $L^{2}(D,m)$;
\item[(iii)] $\lim_{ |x| \to \infty}E_{x}[\tau_D]=0$;
\item[(iv)] $\lim_{ |x| \to \infty} \int_{0}^{\infty}e^{-t}P_{x}[\tau_D>t]\,dt=0$.
\end{itemize}
\end{theorem}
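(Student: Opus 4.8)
The plan is to prove the four statements equivalent by establishing the cycle (i)$\Rightarrow$(ii)$\Rightarrow$(iii)$\Rightarrow$(iv)$\Rightarrow$(i), treating the last implication as the substantial one and reading the other links off from results already available. First, (i)$\Rightarrow$(ii) is immediate, since it is the case $p=2$. The implication (ii)$\Rightarrow$(iii) is exactly the content of \cite[Lemma~1]{KM} quoted in the preceding Example, so nothing new is needed there. For (iii)$\Rightarrow$(iv) I would record the two probabilistic identities
\begin{equation*}
E_x[\tau_D]=\int_0^\infty P_x[\tau_D>t]\,dt,\qquad \int_0^\infty e^{-t}P_x[\tau_D>t]\,dt=E_x\!\left[1-e^{-\tau_D}\right]=R_1^D\bone_D(x),
\end{equation*}
and then use the elementary bound $1-e^{-s}\le s$ for $s\ge0$ to get $\int_0^\infty e^{-t}P_x[\tau_D>t]\,dt\le E_x[\tau_D]$ pointwise, so that (iii) forces (iv).

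The heart of the argument is (iv)$\Rightarrow$(i), and here I would invoke the main theorem. For $X^D$ the conditions I and III are automatic: condition I holds because the rotationally symmetric $\alpha$-stable process is (semigroup) doubly Feller and part processes of doubly Feller processes are strong Feller (Remark~\ref{re}~(iii)), while condition III holds because the $\alpha$-stable semigroup is ultracontractive, hence so is its part on $D$ (Remark~\ref{re}~(iv)). Thus the only missing hypothesis of Theorem~\ref{th:3} is condition II, namely $R_1^D\bone_D(x)\to0$ as $x\to\partial$ in the one-point compactification $D_\partial$. Using the identity $R_1^D\bone_D(x)=\int_0^\infty e^{-t}P_x[\tau_D>t]\,dt$, the approach of $x$ to $\partial$ splits into two regimes: the escape to infinity $|x|\to\infty$, which is precisely hypothesis (iv); and the approach to the finite Euclidean boundary $\partial D$ from inside $D$, for which I would use that the $\alpha$-stable process leaves $D$ immediately from regular boundary points, so that $\tau_D\to0$ in $P_x$-probability and hence $E_x[1-e^{-\tau_D}]\to0$. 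Once condition II is in hand, Theorem~\ref{th:3}~(i) yields compactness of $p_t^D$ on every $L^p(D,m)$, which is (i) and closes the cycle.

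The step I expect to be the main obstacle is the verification of condition II at the finite boundary, that is, controlling $R_1^D\bone_D$ as $x\to\partial D$ rather than as $|x|\to\infty$. This is where the regularity theory of $\partial D$ for the stable process enters, and one must argue that the irregular boundary points (a polar, hence negligible, set) do not obstruct the \emph{uniform} decay demanded by the tightness reformulation in Remark~\ref{re}~(ii). It is likely cleanest to dispose of bounded $D$ separately: there $m(D)<\infty$, so ultracontractivity already gives compactness on every $L^p$ directly via \cite[Theorem~1.6.4]{Da}, and (iii),(iv) hold vacuously. With that case settled, the genuine content of (iv)$\Rightarrow$(i) concerns unbounded $D$, where (iv) governs the behavior at infinity and the boundary regularity handles the remaining finite part of $\partial$.
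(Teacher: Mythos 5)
Your skeleton is the same as the paper's: the paper likewise verifies condition~I via the doubly Feller property, condition~III via ultracontractivity, quotes \cite[Lemma~1]{KM} for (ii)$\Leftrightarrow$(iii), and then obtains the theorem from Theorem~\ref{th:2} and Theorem~\ref{th:3}; your links (i)$\Rightarrow$(ii) and (iii)$\Rightarrow$(iv) (via $R_1^D\bone_D(x)=E_x[1-e^{-\tau_D}]\le E_x[\tau_D]$) are correct and implicit there. The genuine gap is exactly at the step you flag: deducing condition~II from (iv). Your proposed patch --- immediate exit at regular boundary points, with the irregular points dismissed as ``polar, hence negligible'' --- does not work, because at an irregular boundary point the decay of $R_1^D\bone_D$ genuinely fails, and polarity does not localize the failure to a null set. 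Concretely, take $\alpha=2$, $d\ge 2$, let $D_0$ be the unbounded union of shrinking balls from Remark~\ref{rem:HC} (which satisfies (iii), hence (iv)), and set $D=D_0\setminus\{z_0\}$ for a center $z_0$ of one of the balls. Since $\{z_0\}$ is polar, $P_x(\tau_D=\tau_{D_0})=1$ for every $x\in D$, so $D$ still satisfies (iv); but as $x\to z_0$ one has $R_1^{D}\bone_{D}(x)=R_1^{D_0}\bone_{D_0}(x)\to R_1^{D_0}\bone_{D_0}(z_0)>0$, and $x\to z_0$ is an approach to $\partial$ in the one-point compactification of $D$. Thus $R_1^D\bone_D$ stays bounded below by a positive constant on a full (positive Lebesgue measure) punctured neighbourhood of $z_0$, outside every compact subset of $D$: condition~II fails even in essential-supremum form, so (iv)$\not\Rightarrow$II and Theorem~\ref{th:3} is not applicable to this $D$. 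Note also the conflation in your last paragraph: irregular points indeed do not obstruct the Class~(T) tightness of Remark~\ref{re}~(ii), but Theorem~\ref{th:3} does not assume Class~(T) tightness --- it assumes the strictly stronger pointwise condition~II, and that is precisely what irregular points destroy. Your separate treatment of bounded $D$ via \cite[Theorem~1.6.4]{Da} is correct, but it does not help here since the example is unbounded.

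This is a gap in the proof, not a counterexample to the statement: for the $D$ above the $L^p$ spaces and the operators $p_t^D$ coincide with those of $D_0$, so the equivalence itself survives. That observation is also the missing idea: what is needed for (iv)$\Rightarrow$(i) on a general open set is a reduction modulo polar sets (replace $D$ by a regularized open set with the same $L^p$-semigroup before invoking Theorem~\ref{th:3}, and prove such a regularization exists), not boundary-regularity theory applied to $D$ itself. For calibration: the paper's own derivation of Theorem~\ref{th:4} is a single sentence that silently identifies (iv) with condition~II and never mentions the finite boundary, so you have correctly located the weak point of the published argument; but as written your proof of (iv)$\Rightarrow$(i) for unbounded $D$ is incomplete in the same way.
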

\begin{remark}\label{rem:HC}
The semigroup of $X^D$ is not necessarily a Hilbert-Schmidt operator but can be a compact operator on $L^{1}(D,m)$. Namely, there exists an open subset $D \subset \R^d$ which satisfies the following conditions:
\begin{itemize}
\item[(D.1)]
$\lim_{ |x| \to \infty}E_{x}[\tau_D]=0$;
\item[(D.2)] the trace of the semigroup of $X^D$ is infinite. 
\end{itemize}
For example, let $\alpha=2$, $d \in \N$, and $$D=\bigcup_{n=1}^{\infty} D_n:=\bigcup_{n=1}^\infty B(e_n,r_n)$$
Here, $B(e_n,r_n) \subset \R^d$ denotes the open ball centered at $e_n=(n,0,\cdots,0) \in \R^d$ with radius $r_n=\{\log \log(n+3)\}^{-1/2}$. It is easy to see $r_n > 1$ for $n>24$. We shall check $D$ satisfies the conditions (D.1) and (D.2). We denote by $p_{t}^{D_n}(x,y)$ the heat kernel density of $X^{D_n}$ with respect to $m$. By \cite[Theorem~1.9.3]{Da},
\begin{align*}
&\int_{D}p_{t}^{D}(x,x)\,dm(x)\ge \sum_{n=25}^{\infty}\int_{D_n}p_{t}^{D_n}(x,x)\,dm(x) \\
&\ge \sum_{n=25}^{\infty} (8 \pi t)^{-d/2} \times r_n \times \exp(-8 \pi^2 dt/r_n^2)\\
&\ge (8 \pi t)^{-d/2}\sum_{n=25}^{\infty} \{ \log(n+3) \}^{-1/2-8 \pi^2 dt}=\infty.
\end{align*}
Therefore, the trace of the semigroup of $X^D$ is infinite. On the other hand, for any $x \in D_n$, 
\begin{align*}
&E_{x}[\tau_D]=E_{x}[\tau_{D_n}] \le E_{o}[\tau_{B(|e_n-x|+r_n)}].
\end{align*}
Here, $o$ denotes the origin of $\R^d$ and $B(|e_n-x|+r_n)$ denotes the open ball centered at the origin with radius $|e_n-x|+r_n$. $|e_n-x|$ is the length of $e_n-x$. Since $|e_n-x| \le r_n$, it holds that
$$E_{o}[\tau_{B(|e_n-x|+r_n)}]=(|e_n-x|+r_n)^2/d \le 4r_n^2/d.$$
Since $r_n \to 0$ as $n \to \infty$, $\lim_{ |x| \to \infty}E_{x}[\tau_D]=0$.
\end{remark}
\begin{example}\label{ex:kill}
Let $\alpha \in (0,2]$ and $X=(\{X_t\}_{t \ge 0}, \{P_x\}_{x \in \R^d},\zt)$ be the rotationally symmetric $\alpha$-stable process on $\R^d$. The semigroup of $X$ is denoted by $\{p_t\}_{t>0}$. Let $V$ be a positive Borel measurable function on $\R^d$ with the following properties:
\begin{itemize}
\item[(V.1)] $V$ is locally bounded. Namely, for any relatively compact open subset $G \subset \R^d$, $\sup_{x \in G}V<\infty$;
\item[(V.2)] $\lim_{x \in \R^d,\ |x| \to \infty}V(x)=\infty$.
\end{itemize}

We set $A_t=\int_{0}^{t}V(X_s)\,ds$. Let $X^V=(\{X_t\}_{t \ge 0}, \{P_{x}^{V}\}_{x \in \R^d},\zt)$ be the subprocess of $X$ defined by $dP_{x}^{V}=\exp(-A_t)dP_x$. The semigroup $\{p_{t}^{V}\}_{t>0}$ is identified with 
\begin{align*}
&p_{t}^{V}f(x)=E_{x}[\exp(-A_t)f(X_t)],
\quad f \in \mathcal{B}_{b}(\R^d),\ x \in \R^d.
\end{align*}

\begin{theorem}\label{thm:kill}
$X^V$ satisfies the conditions from I to III.
\end{theorem}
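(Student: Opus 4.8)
The plan is to verify each of the three conditions I, II, and III for the subprocess $X^V$, exploiting the fact that the base process $X$ is the rotationally symmetric $\alpha$-stable process, whose analytic properties (ultracontractivity, double Feller property, explicit heat kernel bounds) are well understood. For condition I (semigroup strong Feller), I would argue that since $X$ is semigroup doubly Feller, the Feynman--Kac perturbation by the multiplicative functional $\exp(-A_t)$ preserves the strong Feller property. The key tool here is the identity $p_t^V f(x) = p_t f(x) - E_x\!\left[\int_0^t V(X_s)\exp\!\big(-\!\int_s^t V(X_r)\,dr\big) f(X_t)\,ds\right]$, or more simply a dominated-convergence argument: because $V$ is locally bounded by (V.1), on any compact set the functional $A_t$ is controlled, and one approximates $V$ by bounded potentials $V\wedge k$. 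The continuity of $x \mapsto p_t^V f(x)$ then follows from continuity of the approximating semigroups together with uniform convergence on compacta. Absolute continuity of the kernel is inherited from the absolute continuity of $p_t$.

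For condition III (local $L^\infty$-compactness), I would invoke Remark~\ref{re}~(iv): it suffices to show the semigroup of $X^V$ is ultracontractive. Since the stable heat kernel $p_t(x,y)$ is bounded and $\exp(-A_t) \le 1$, we have $0 \le p_t^V(x,y) \le p_t(x,y)$, so $p_t^V$ maps $L^1$ into $L^\infty$ with the same ultracontractivity bound as $X$ itself. This gives condition III immediately for every $t>0$, with no restriction on the open set $U$.

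The main obstacle, and the genuinely substantive part, is condition II (tightness property): $\lim_{x \to \partial} R_1^V \bone_{\R^d}(x) = 0$, where $\partial$ is the point at infinity since the life time is infinite. This amounts to showing $\lim_{|x|\to\infty} E_x\!\left[\int_0^\infty e^{-t}\exp(-A_t)\,dt\right] = 0$. The plan is to use the growth condition (V.2): for $|x|$ large, $V$ is large on a neighborhood of $x$, which forces $A_t$ to grow quickly before the process can escape to a region where $V$ is small, thereby making $\exp(-A_t)$ small. Concretely, I would fix a large constant $M$, choose a radius $\rho$ so that the ball $B(x,\rho)$ lies in the region $\{V \ge M\}$ for $|x|$ large enough (possible by (V.2)), and split the resolvent integral according to whether the process has exited $B(x,\rho)$ by time $t$. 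On the event of no exit, $A_t \ge Mt$, contributing a term controlled by $\int_0^\infty e^{-(1+M)t}\,dt = (1+M)^{-1}$, which is small for large $M$. The complementary event, where the process exits the ball quickly, is controlled by the exit-time estimate for the stable process: $\sup_x P_x(\tau_{B(x,\rho)} \le t)$ can be made small by taking $\rho$ large relative to $t$, using the scaling and the known bound $P_x(\tau_{B(x,\rho)} \le t) \le C t/\rho^\alpha$ (or the analogous Brownian estimate when $\alpha=2$). Balancing $M$ and $\rho$ as $|x|\to\infty$ yields the claim; this is essentially the argument underlying \cite[Theorem~4.2]{TTT}, and the careful handling of the two competing scales is where the real work lies.
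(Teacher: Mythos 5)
Your proposal is correct in substance, and conditions I--III are indeed verified in the same order of difficulty you anticipate, but your route differs from the paper's in two places, most notably condition II. The paper's proof of II is soft: spatial homogeneity of $X$ plus (V.2) give $\lim_{|x|\to\infty}P^{V}_{x}(\zeta>t)=0$ for each $t$; positivity of $V$ and strong Feller give $p=\sup_{x}P^{V}_{x}(\zeta>1)<1$; supermultiplicativity of survival probabilities then yields $\sup_{x}E^{V}_{x}[\zeta]\le 1/(1-p)<\infty$; and a final Markov-property step upgrades this to $\lim_{|x|\to\infty}E^{V}_{x}[\zeta]=0$, i.e.\ decay of $R_{0}^{V}\bone_{\R^d}$, which is stronger than the required decay of $R_{1}^{V}\bone_{\R^d}$. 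Your argument is instead a direct two-parameter localization: on $B(x,\rho)\subset\{V\ge M\}$ either the process has not yet exited, so $e^{-A_t}\le e^{-Mt}$ and the contribution to the resolvent is at most $(1+M)^{-1}$, or it has exited, which is controlled by the stable exit-time estimate $P_{x}(\tau_{B(x,\rho)}\le t)\le Ct\rho^{-\alpha}$, giving a term $O(\rho^{-\alpha})$. This is shorter and quantitative, and it suffices since condition II only concerns $R_1$; what it costs is reliance on the scaling/exit-time bound, which the paper's softer argument avoids, and it yields only the $R_1$ statement rather than the $R_0$ one. (Also, the ``balancing'' you worry about is not really there: one chooses $M$, then $\rho$, then the threshold on $|x|$, sequentially.) For condition I the paper does not truncate the potential: it proves $\lim_{s\to0}\sup_{x\in K}E_{x}[1-e^{-A_s}]=0$ using (V.1) together with Lemma~\ref{lem:locuni}, and then exhibits $p_{t}^{V}f$ as the locally uniform limit, as $s\to 0$, of the continuous functions $p_{s}(p_{t-s}^{V}f)$; your truncation $V\wedge k$ works as well, but note that the locally uniform convergence $p_{t}^{V\wedge k}f\to p_{t}^{V}f$ that you assert rests on exactly the same ingredient, namely $\sup_{x\in K}P_{x}(\tau_{B(n)}\le t)\to0$ as $n\to\infty$ (since $A^{V\wedge k}$ and $A^{V}$ agree before $\tau_{B(n)}$ once $k\ge\sup_{B(n)}V$); you leave this implicit, though for the stable process it follows easily from translation invariance, e.g.\ $P_{x}(\tau_{B(n)}\le t)\le P_{o}\bigl(\sup_{s\le t}|X_s|\ge n-n_0\bigr)$ for $x\in B(n_0)$. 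Condition III is handled identically in both proofs: domination $0\le p_{t}^{V}(x,y)\le p_{t}(x,y)$ gives ultracontractivity, and Remark~\ref{re}~(iv) concludes.
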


Before proving Theorem~\ref{thm:kill}, we give a lemma. We denote by $B(n)$ the open ball of $\R^d$ centered at the origin $o$ and radius $n \in \N$. The semigroup of $X$ is doubly Feller in the sense of \cite{CK}. Thus, for any $n \in \N$, the semigroup of $X^{B(n)}$ is strong Feller.
\begin{lemma}\label{lem:locuni} It holds that
\begin{equation*}
\lim_{n \to \infty}\sup_{x \in K}P_{x}(\tau_{B(n)} \le t)=0
\end{equation*}
for any $t>0$ and compact subset $K \subset \R^d$. Here, $\tau_{B(n)}=\inf\{t>0 \mid X_t \in \R^d \setminus B(n)\}$.
\end{lemma}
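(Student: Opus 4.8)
The plan is to establish the statement by applying Dini's theorem to the functions $g_n(x):=P_x(\tau_{B(n)}\le t)$ on the compact set $K$, in the same spirit as the proof of Lemma~\ref{lem:uni}. Since $K$ is compact, I would first fix $N_0\in\N$ with $K\subset B(N_0)$ and restrict attention to $n>N_0$, so that $K$ lies in the interior of every ball $B(n)$ under consideration.

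The first key step is to rewrite $g_n$ in a form in which its continuity is manifest. Because the rotationally symmetric $\alpha$-stable process $X$ is conservative, $p_t\bone_{\R^d}(x)=P_x(t<\zt)=1$, and therefore
$$g_n(x)=P_x(\tau_{B(n)}\le t)=1-P_x(\tau_{B(n)}>t)=1-p_t^{B(n)}\bone_{B(n)}(x),$$
where $p_t^{B(n)}$ is the semigroup of the part process $X^{B(n)}$ and the last equality uses $p_t^{B(n)}\bone_{B(n)}(x)=E_x[\bone_{B(n)}(X_t):t<\tau_{B(n)}]=P_x(t<\tau_{B(n)})$. Since $X^{B(n)}$ is strong Feller (as recalled just before the lemma), $p_t^{B(n)}\bone_{B(n)}\in C_b(B(n))$, and hence each $g_n$ is continuous on $B(n)$, in particular on $K$.

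Next I would verify the remaining hypotheses of Dini's theorem. Monotonicity is immediate: $B(n)\subset B(n+1)$ gives $\tau_{B(n)}\le\tau_{B(n+1)}$, hence $\{\tau_{B(n)}\le t\}\supset\{\tau_{B(n+1)}\le t\}$ and $g_n\ge g_{n+1}$ pointwise. For the pointwise limit I would use that the sample paths of $X$ are c\`adl\`ag and therefore bounded on $[0,t]$, so that $\sup_{0\le s\le t}|X_s|<\infty$ $P_x$-a.s.; thus for $P_x$-a.e.\ $\omega$ one has $\tau_{B(n)}(\omega)>t$ for all large $n$, the indicators $\bone_{\{\tau_{B(n)}\le t\}}$ decrease to $0$, and $g_n(x)\to0$ by dominated convergence. (Equivalently, Lemma~\ref{lem:ql} applied to the conservative process $X$ gives $\tau_{B(n)}\to\zt=\infty$ $P_x$-a.s.)

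Finally, since $(g_n)$ is a non-increasing sequence of continuous functions on the compact set $K$ converging pointwise to the continuous function $0$, Dini's theorem yields $\lim_{n\to\infty}\sup_{x\in K}g_n(x)=0$, which is exactly the assertion. I expect the only substantive ingredient to be the continuity in the first step, which rests on the strong Feller property of the part processes $X^{B(n)}$; once continuity and monotonicity are in hand, the non-explosion of the stable process supplies the pointwise limit and Dini upgrades it to uniform convergence over $K$. The single point to watch is that $K$ must sit in the interior of $B(n)$ so that interior continuity of $g_n$ applies on $K$, which is guaranteed by taking $n>N_0$.
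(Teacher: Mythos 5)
Your proof is correct and follows essentially the same route as the paper: rewrite $P_x(\tau_{B(n)}\le t)=1-p_t^{B(n)}\bone(x)$, invoke the strong Feller property of the part processes for continuity on $K$, get the monotone pointwise convergence to $0$ from non-explosion (Lemma~\ref{lem:ql} plus conservativeness), and conclude by Dini's theorem. The only cosmetic difference is your alternative path-boundedness argument for the pointwise limit (where you should take the supremum over $[0,t+1]$, say, to rule out $\tau_{B(n)}=t$), but you also give the paper's argument, so nothing is missing.
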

\begin{proof}
Without loss of generality, we may assume $K \subset B(1)$. For any $t>0$, $n \in \N$, and $x \in \R^d$, 
\begin{align*}
P_{x}(\tau_{B(n)} \le t)&=\bone_{\R^d}(x)-P_{x}(\tau_{B(n)}>t)\\
&=\bone_{\R^d}(x)-p_{t}^{B(n)}\bone_{\R^d}(x).
\end{align*}
Thus, we see from the strong Feller property of $X^{B(n)}$ that for any $n \in \N$, $P_{\cdot}(\tau_{B(n)} \le t)$ is a continuous function on $K$. It follows from the conservativesness of $X$ and Lemma~\ref{lem:ql} that for any $x \in \R^d$,
$$\varlimsup_{n \to \infty}P_{x}(\tau_{B(n)} \le t)\le P_{x}(\zt \le t)=0$$
 and the convergence is non-increasing. The proof is complete by Dini's theorem.
\end{proof}

\begin{proof}[Proof of Theorem~\ref{thm:kill}]
Since the semigroup of $X$ is ultracontractive, so is the semigroup of $X^V$. Hence, the condition~III is satisfied. We will check $X^V$ satisfies the condition~I. Let $K$ be a compact subset of $\R^d$ and take $n_0 \in \N$ such that $K \subset B(n_0)$. Then, for any $s \in (0,1)$ and $n>n_0$,
\begin{align*}
 &\sup_{x \in K}E_{x}[1-\exp(-A_s )]\\
 & \le \sup_{x \in K}E_{x}[A_{s \wg \tau_{B(n)}}]+\sup_{x \in K}P_{x}(\tau_{B(n)} \le s) \\
 &=\sup_{x \in K}E_{x}\left[\int_{0}^{ s\wg \tau_{B(n)}}V(X_t)\,dt\right]+\sup_{x \in K}P_{x}(\tau_{B(n)} \le 1)=:I_1+I_2.
 \end{align*}
 By the condition~(V.1), $\lim_{s \to 0}I_1=0$. By Lemma~\ref{lem:locuni}, $\lim_{n \to \infty}I_2=0$. Thus, 
 \begin{equation}
 \lim_{s \to 0}\sup_{x \in K}E_{x}[1-\exp(-A_s )]=0.\label{eq:eqa}
 \end{equation}
Let $t>0$ and $f \in \mathcal{B}_{b}(\R^d)$. Since the semigroup of $X$ is strong Feller, for any $s \in (0,t)$, $p_{s}p_{t-s}^{V}f$ is continuous on $\R^d$. By using \eqref{eq:eqa}, we have 
\begin{align*}
&\varlimsup_{s \to 0}\sup_{x \in K}\left| p_{t}^{V}f(x)-p_{s}p_{t-s}^{V}f(x)  \right|\\
&=\varlimsup_{s \to 0}\sup_{x \in K} \left| E_{x}[\exp(-A_t)f(X_{t})]-E_{x}[p_{t-s}^{V}f(X_s)]\right| \\
&=\varlimsup_{s \to 0}\sup_{x \in K}\left|  E_{x} [\exp(-A_s)E_{X_{s}}[\exp(-A_{t-s})f(X_{t-s})]]-E_{x}[p_{t-s}^{V}f(X_s)] \right|\\
&\le \|f\|_{L^\infty(\R^d,m)} \times \varlimsup_{s \to 0}\sup_{x \in K}E_{x}[1-\exp(-A_s )]=0.
\end{align*}
This means that the semigroup of $X^V$ is strong Feller and the condition~I is satisfied.
 
 Finally, we shall show the condition~II. Let $x \in \R^d$ and $t>0$. Since $X$ is spatially homogeneous, 
\begin{equation*}
P_{x}^{V}(\zt>t)=E_{x}\left[\exp \left(-\int_{0}^{t}V(X_s)\,ds\right)\right]=E_{o}\left[\exp \left(-\int_{0}^{t}V(x+X_s)\,ds\right)\right].
\end{equation*}
It follows from the condition~(V.2) that for any $t>0$, $\lim_{x \in \R^d,\ |x| \to \infty}P_{x}^{V}(\zt>t)=0$. By the positivity of $V$, we can show that $\sup_{x \in \R^d}P_{x}^V(\zt>t)<1$ for any $t>0$. By the additivity of $\{A_t\}_{t \ge 0}$, 
\begin{align*}
P_{x}^{V}(\zt>t+s)&=E_{x}[\exp(-A_{t+s}):t+s<\zt]\\
&=E_{x}[\exp(-A_s)E_{X_s}[\exp(-A_t):t<\zt]:s<\zt] \\
&\le \sup_{x \in \R^d}P_{x}^{V}(\zt>t) \times  \sup_{x \in \R^d}P_{x}^{V}(\zt>s)
\end{align*}
for any $x \in \R^d$ and $s,t>0$. Hence, letting $p= \sup_{x \in \R^d}P_{x}^{V}(\zt>1)<1 $, we have
\begin{align*}
\sup_{x \in \R^d}E_{x}^{V}[\zt]&=\sup_{x \in \R^d}\int_{0}^{\infty}P_{x}^{V}(\zt>t)\,dt   \le \sum_{n=0}^{\infty}\int_{n}^{n+1}\sup_{x \in \R^d}P_{x}^{V}(\zt>n)\,dt \\
&\le 1+\sum_{n=1}^{\infty}p^{n}=1/(1-p).
\end{align*}
We denote by $p_{t}^{V}(x,y)$ the heat kernel density of $X^V$. For any $\eps>0$, 
\begin{align}
E_{x}^{V}[\zt]&\le \eps+E_{x}^{V}[E^{V}_{X_{\eps}^V}[\zt]] \le \eps+\int_{\R^d}p_{\eps}^{V}(x,y)E^{V}_{y}[\zt]\,dm(y) \notag \\
&\le \eps+\frac{1}{1-p}\times P_{x}^{V}(\zt>\eps). \notag
\end{align}
By letting $x \to \infty$, we have $\varlimsup_{x \in \R^d,\ |x| \to \infty}E_{x}^{V}[\zt] \le \eps$. Since $\eps$ is chosen arbitrarily, the condition~II is satisfied.
\end{proof}
\end{example}

\begin{example}\label{ex:tc}
Let $\alpha \in (0,2]$ and $d >\alpha $, and $X=(\{X_t\}_{t \ge 0}, \{P_x\}_{x \in \R^d},\zt)$ be the rotationally symmetric $\alpha$-stable process on $\R^d$. We note that $X$ is transient. Let us consider the additive functional $\{A_t\}_{t \ge 0}$ of $X$ defined by 
\begin{equation*}
A_t=\int_{0}^{t}W(X_s)^{-1}\,ds,\quad t \ge 0.
\end{equation*}
Here $W$ is a Borel measurable function on $\R^d$ with the condition:
\begin{equation*}
1+|x|^{\beta} \le W(x) <\infty ,\quad x \in \R^d,
\end{equation*}
where $\beta \ge 0$ is a constant. The Revuz measure of $\{A_t\}_{t \ge 0}$ is identified with $W^{-1}\,m$. Denote $\mu=W^{-1}m$. $\mu$ is not necessary a finite measure on $\R^d$. Noting that $A_t$ is continuous and strictly increasing in $t$, we define $X^{\mu}=(\{X_t^{\mu}\}_{t \ge 0}, \{P_x\}_{x \in \R^d},\zt^{\mu})$ by 
\begin{equation*}
X_t^{\mu}=X_{\tau_t},\ t \ge0,\quad \tau=A^{-1},\quad \zt^{\mu}=A_{\infty}.
\end{equation*}
Then, $X^{\mu}$ becomes a $\mu$-symmetric Hunt process on $\R^d$. $X^\mu$ is transient because the transience is preserved by time-changed transform (\cite[Theorem~6.2.3]{FOT}). The semigroup and the resolvent of $X^{\mu}$ are denoted by $\{p_t^{\mu}\}_{t>0}$, $\{R_{\alpha}^{\mu}\}_{\alpha \ge 0}$, respectively. 
\begin{theorem}\label{th:tc}
If $\beta>\alpha$, $X^{\mu}$ satisfies the conditions from I to III.
\end{theorem}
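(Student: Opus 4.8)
The plan is to verify the three structural conditions I--III for the time-changed process $X^{\mu}$ one at a time, with the hypothesis $\beta>\alpha$ entering decisively only in the tightness condition~II. \emph{Tightness (condition~II).} Because $d>\alpha$, the stable process $X$ is transient with Green kernel $G(x,y)=\mathcal{A}_{d,\alpha}|x-y|^{\alpha-d}$, and the lifetime of $X^{\mu}$ is $\zt^{\mu}=A_\infty=\int_0^\infty W(X_s)^{-1}\,ds$. Since $X_t^{\mu}\in\R^d$ for $t<\zt^{\mu}$, I would first note
\[
R_1^{\mu}\bone_{\R^d}(x)=E_x\!\left[1-e^{-\zt^{\mu}}\right]\le E_x[\zt^{\mu}]=\int_{\R^d}G(x,y)W(y)^{-1}\,dm(y)\le\mathcal{A}_{d,\alpha}\int_{\R^d}\frac{|x-y|^{\alpha-d}}{1+|y|^{\beta}}\,dm(y).
\]
Then I would show this last integral tends to $0$ as $|x|\to\infty$ by splitting $\R^d$ into $\{|y|\le|x|/2\}$, $\{|y-x|\le|x|/2\}$, and the complementary far region; on each piece the integrand is controlled by a fixed power of $|x|$, and the resulting bound is $O(|x|^{\alpha-d})+O(|x|^{\alpha-\beta})$, which vanishes \emph{precisely} because $d>\alpha$ and $\beta>\alpha$. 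This yields condition~II.

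\emph{Local $L^\infty$-compactness (condition~III).} By Remark~\ref{re}~(iv) it suffices to prove that the semigroup of $X^{\mu}$ itself is ultracontractive, since every part process $X^{\mu,U}$ has a pointwise smaller heat kernel and killing preserves ultracontractivity. Writing $P_x(X_t^{\mu}\in dy)=q_t(x,y)\,dm(y)$, the density with respect to $\mu$ is $p_t^{\mu}(x,y)=W(y)q_t(x,y)$, so ultracontractivity amounts to $\sup_{x,y}W(y)q_t(x,y)<\infty$; heuristically this should hold because the time change runs $X$ quickly through the region where $W$ is large, forcing $q_t(x,y)$ to decay like $W(y)^{-1}$ at infinity. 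I would try to establish this via a Nash-type inequality for the stable form $\mathcal{E}$ on $L^{2}(\R^d,\mu)$, taking as input the classical stable Nash inequality $\|u\|_{L^2(m)}^{2+2\alpha/d}\le C\,\mathcal{E}(u,u)\,\|u\|_{L^1(m)}^{2\alpha/d}$ and converting the Lebesgue norms into $\mu$-norms through the growth $W\ge 1+|x|^{\beta}$. \emph{This is where I expect the main obstacle.} The transfer is delicate because $W$ is only bounded \emph{below}: a naive comparison of norms breaks down as soon as $\{W>R\}$ carries infinite Lebesgue measure (which happens, e.g., when $U=\R^d$ and $\beta<d$), so the boundedness of $p_t^{\mu}(x,y)=W(y)q_t(x,y)$ out at infinity must be extracted from genuine heat-kernel decay reflecting the acceleration of the time change, not from a soft weighted interpolation.

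\emph{Semigroup strong Feller (condition~I).} Finally, strong Feller of $X^{\mu}$ should follow from joint continuity (in the spatial variable) of the bounded kernel produced in the previous step, combined with the doubly-Feller property of the stable process $X$. Alternatively, I would mimic the argument in the proof of Theorem~\ref{thm:kill}: approximate $p_t^{\mu}f$ uniformly on compact sets by operators built from the strong-Feller semigroup of $X$, and control the error through the additive functional $A$ by an estimate of the type $\lim_{s\to0}\sup_{x\in K}E_x[\,\cdot\,]=0$ adapted from the killing setting to the time-change setting. Once condition~III supplies a good kernel, condition~I is essentially a regularity consequence, and Theorem~\ref{th:3} then delivers the asserted $L^p$-compactness and $p$-independence of the spectrum.
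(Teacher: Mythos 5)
Your treatment of condition~II is correct and coincides with the paper's: the paper likewise bounds $R_{1}^{\mu}\bone_{\R^d}(x)\le R_{0}^{\mu}\bone_{\R^d}(x)=\int_{\R^d}G(x,y)\,d\mu(y)\le c(d,\alpha)\int_{\R^d}|x-y|^{\alpha-d}(1+|y|^{\beta})^{-1}\,dy$ and quotes \cite[Lemma~6.1]{MS} for exactly the decay you propose to prove by splitting (only note that the case $\beta=d$ carries an extra logarithm, harmless since $\beta>\alpha$). The first genuine gap is condition~III, which you leave unproven and yourself flag as the main obstacle. That obstacle is an artifact of starting from the Nash inequality, whose right-hand side carries $\|u\|_{L^{1}(m)}$; this norm indeed cannot be dominated by $\|u\|_{L^{1}(\mu)}$, which is the failure you describe. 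The paper's Lemma~\ref{lem:uc} avoids it by starting instead from the fractional Sobolev inequality (valid since $d>\alpha$): $\bigl\{\int_{\R^d}|f|^{q}\,dm\bigr\}^{2/q}\le C\,\mathcal{E}(f,f)$ for some $q>2$. Here every norm sits on the favorable side: $W\ge1$ means $\mu\le m$, so $\bigl\{\int_{\R^d}|f|^{q}\,d\mu\bigr\}^{2/q}\le\bigl\{\int_{\R^d}|f|^{q}\,dm\bigr\}^{2/q}\le C\,\mathcal{E}(f,f)$, first for $f\in\mathcal{F}$ and then for all $f\in\mathcal{F}^{\mu}=\mathcal{F}_e\cap L^{2}(\R^d,\mu)$ by Fatou along an approximating sequence. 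A Sobolev inequality for the Dirichlet form $(\mathcal{E},\mathcal{F}^{\mu})$ on $L^{2}(\R^d,\mu)$ gives ultracontractivity of $\{p_{t}^{\mu}\}_{t>0}$ by \cite{CKS} (or \cite[Theorem~4.2.7]{FOT}), and condition~III follows via Remark~\ref{re}~(iv); no heat-kernel decay ``reflecting the acceleration of the time change'' is needed.

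The second genuine gap is condition~I: the proposed adaptation of the proof of Theorem~\ref{thm:kill} cannot work, because killing and time change are perturbations of entirely different kinds. Killing is multiplicative on the same clock, $p_{t}^{V}f(x)=E_{x}[e^{-A_t}f(X_t)]$, so the error against $p_{s}p_{t-s}^{V}f$ is bounded by $\|f\|_{L^{\infty}(\R^d,m)}E_{x}[1-e^{-A_s}]$, which vanishes uniformly on compacts as $s\to0$. For the time change, $X_{t}^{\mu}=X_{\tau_t}$ with $\tau=A^{-1}$, and since $W$ is unbounded (the hypotheses do not even make it locally bounded), $\tau_s$ is in no sense uniformly close to $s$; the difference $E_{x}[g(X_{\tau_s})]-E_{x}[g(X_s)]$ admits no analogue of the $E_{x}[1-e^{-A_s}]$ estimate. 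Your fallback, continuity of the kernel ``produced in the previous step,'' is also empty: even a correct step~III yields only boundedness of the kernel, not continuity. What the paper actually does is (a) observe that $W^{-1}\le1$ gives $\sup_{x}E_{x}[A_t]\le t$, so $\mu$ is trivially in the Kato class of $X$ and the doubly Feller resolvent theory of \cite{KKT} yields resolvent strong Feller for $X^{\mu}$ (Lemma~\ref{lem:str}); (b) run the eigenfunction-expansion argument of \cite{AK} on a bounded open set $U$ --- discrete spectrum from ultracontractivity, bounded continuous versions of eigenfunctions from resolvent strong Feller, uniform convergence of $\sum_{n}e^{-\lambda_n t}\varphi_n(x)\varphi_n(y)$, and identification of this kernel with the transition function via uniqueness of Laplace transforms --- to conclude that $p_{t}^{\mu,U}$ is strong Feller (Proposition~\ref{prop:str}); and (c) pass to all of $\R^d$ by Dini's theorem (Corollary~\ref{cor:str}). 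None of this machinery is present in, or replaceable by, the argument you sketch.
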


Before proving Theorem~\ref{th:tc}, we give some notions and lemmas. Let $(\mathcal{E},\mathcal{F})$ be the Dirichlet form of $X$. $(\mathcal{E},\mathcal{F})$ is identified with
\begin{align*}
\mathcal{E}(f,g)&=\frac{K(d,\alpha)}{2}\int_{\R^d}\hat{f}(x)\hat{g}(x)\,|x|^{\alpha}dx,\\
f,g \in \mathcal{F}&=\left\{f \in L^{2}(\R^d,m) \relmiddle|  \int_{\R^d}|\hat{f}(x)|^2\,|x|^{\alpha}dx<\infty \right\}.
\end{align*}
Here $\hat{f}$ denotes the Fourier transform of $f$ and $K(d,\alpha)$ is a positive constant. Recall that $m$ is the Lebesgue measure on $\R^d$. $m$ is also denoted by $dx$. Let $(\mathcal{E},\mathcal{F}_e)$ denotes the extended Dirichlet space of $(\mathcal{E},\mathcal{F})$, Namely, $\mathcal{F}_e$ is the family of Lebesgue measurable functions $f$ on $\R^d$ such that $|f|<\infty$ $m$-a.e. and there exists a sequence $\{f_n\}_{n=1}^{\infty}$ of functions in $\mathcal{F}$ such that $\lim_{n \to \infty}f_n=f$ $m$-a.e. and $\lim_{n,k\to \infty}\mathcal{E}(f_n-f_k,f_n-f_k)=0$. $\{f_n\}_{n=1}^{\infty}$ as above called an {\it approximating sequence} for $f \in \mathcal{F}_e$ and  $\mathcal{E}(f,f)$ is defined by 
$\mathcal{E}(f,f)=\lim_{n \to \infty}\mathcal{E}(f_n,f_n).$
Since the quasi support of $\mu$ is identified with $\R^d$, the Dirichlet form $(\mathcal{E}^\mu,\mathcal{F}^\mu)$ of $X^\mu$ is described as follows (see \cite[Theorem~6.2.1, (6.2.22)]{FOT} for details).
\begin{align*}
\mathcal{E}^\mu(f,g)&=\mathcal{E}(f,g), \quad
\mathcal{F}^{\mu}= \mathcal{F}_e\cap L^{2}(\R^d,\mu).
\end{align*}
By identifying the Dirichlet form of $X^\mu$, we see that the semigroup of $X^{\mu}$ is ultracontractive.
\begin{lemma}\label{lem:uc}
For any $f \in L^{1}(\R^d,\mu)$ and $t>0$, $p_t^{\mu}f \in L^{\infty}(\R^d,\mu)$. 
\end{lemma}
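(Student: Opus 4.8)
The plan is to derive the ultracontractivity of $\{p_t^\mu\}_{t>0}$ from a Sobolev-type inequality for the Dirichlet form $(\mathcal{E}^\mu,\mathcal{F}^\mu)$ with respect to the reference measure $\mu$, and then to invoke the classical passage from a Sobolev (equivalently Nash) inequality to ultracontractivity. Since $d>\alpha$, the process $X$ is transient and its extended Dirichlet space $\mathcal{F}_e$ embeds continuously into $L^{r}(\R^d,m)$ with $r:=2d/(d-\alpha)>2$; that is, the fractional Sobolev (Hardy--Littlewood--Sobolev) inequality
\[
\|f\|_{L^{r}(\R^d,m)}^{2}\le C_S\,\mathcal{E}(f,f),\qquad f\in\mathcal{F}_e,
\]
holds for some constant $C_S>0$ (recall $\mathcal{E}(f,f)$ is, up to a constant, $\|(-\Delta)^{\alpha/4}f\|_{L^2(m)}^2$). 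Because $\mathcal{F}^\mu=\mathcal{F}_e\cap L^{2}(\R^d,\mu)\subset\mathcal{F}_e$ and $\mathcal{E}^\mu=\mathcal{E}$ on $\mathcal{F}^\mu$, this inequality is available for every $f\in\mathcal{F}^\mu$.

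The key step is to transfer this $m$-Sobolev inequality into a Sobolev inequality with respect to $\mu=W^{-1}m$. I would fix an exponent $s>d/\alpha$ and note that, since $\beta>\alpha$ and $s>d/\alpha$, we have $\beta s>\alpha s>d$, whence
\[
\int_{\R^d}W^{-s}\,dm\le\int_{\R^d}(1+|x|^{\beta})^{-s}\,dm<\infty .
\]
This is exactly where the hypothesis $\beta>\alpha$ enters: it guarantees that $W^{-1}\in L^{s}(\R^d,m)$ for an $s$ large enough to keep the resulting $\mu$-exponent above $2$. Applying Hölder's inequality to $|f|^{q}\cdot W^{-1}$ with the conjugate pair $(r/q,s)$, where $q:=r(1-1/s)$, gives
\[
\|f\|_{L^{q}(\R^d,\mu)}^{q}=\int_{\R^d}|f|^{q}W^{-1}\,dm\le\|f\|_{L^{r}(\R^d,m)}^{q}\,\|W^{-1}\|_{L^{s}(\R^d,m)} .
\]
Combining this with the Sobolev inequality above yields
\[
\|f\|_{L^{q}(\R^d,\mu)}^{2}\le C\,\mathcal{E}^\mu(f,f),\qquad f\in\mathcal{F}^\mu,
\]
and a direct computation shows $q=r(1-1/s)>2$ precisely because $s>d/\alpha$.

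Finally, I would convert this $\mu$-Sobolev inequality into ultracontractivity. Interpolating the $L^{2}(\mu)$-norm between $L^{1}(\mu)$ and $L^{q}(\mu)$ and inserting the inequality above produces a Nash inequality
\[
\|f\|_{L^{2}(\R^d,\mu)}^{2+4/\nu}\le C'\,\mathcal{E}^\mu(f,f)\,\|f\|_{L^{1}(\R^d,\mu)}^{4/\nu},\qquad \nu:=\frac{2q}{q-2},
\]
for $f\in\mathcal{F}^\mu\cap L^{1}(\R^d,\mu)$. By the standard Nash-inequality argument (an ODE estimate for $t\mapsto\|p_t^\mu f\|_{L^{2}(\mu)}^{2}$, together with the symmetry and the semigroup property of $\{p_t^\mu\}$), this gives $\|p_t^\mu\|_{L^{1}(\mu)\to L^{\infty}(\mu)}\le C''\,t^{-\nu/2}<\infty$ for every $t>0$; see \cite{Da}. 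In particular $p_t^\mu f\in L^{\infty}(\R^d,\mu)$ for all $f\in L^{1}(\R^d,\mu)$, which is the assertion.

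I expect the main obstacle to be the first step: pinning down the correct Sobolev inequality on the extended Dirichlet space $\mathcal{F}_e$ of the transient form $(\mathcal{E},\mathcal{F})$ (rather than on $\mathcal{F}$ itself) and checking that it is legitimately inherited by $\mathcal{F}^\mu$. The transfer via Hölder and the chain Sobolev $\Rightarrow$ Nash $\Rightarrow$ ultracontractivity are routine once the exponents are matched, and the role of $\beta>\alpha$ is transparent: it is exactly the condition under which one can choose the Hölder exponent $s$ so that $W^{-1}\in L^{s}(\R^d,m)$ while keeping $q=r(1-1/s)>2$.
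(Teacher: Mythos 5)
Your proof is correct under the hypothesis $\beta>\alpha$ of Theorem~\ref{th:tc}, and its overall architecture --- fractional Sobolev inequality with respect to $m$, transfer to a Sobolev inequality for $(\mathcal{E}^\mu,\mathcal{F}^\mu)$ in $L^{2}(\R^d,\mu)$, then the Nash/CKS passage to ultracontractivity --- is the same as the paper's. The genuine difference is the transfer step, where the paper's argument is both simpler and more general than your H\"older argument: since $W(x)\ge 1+|x|^{\beta}\ge 1$, one has $\mu=W^{-1}m\le m$ pointwise, hence
\[
\left\{\int_{\R^d}|f|^{q}\,d\mu\right\}^{2/q}\le\left\{\int_{\R^d}|f|^{q}\,dm\right\}^{2/q}\le C\,\mathcal{E}(f,f),\qquad f\in\mathcal{F},
\]
with the unchanged exponent $q=2d/(d-\alpha)$, no H\"older inequality and no integrability of $W^{-1}$ whatsoever; the paper then extends this to $\mathcal{F}^{\mu}=\mathcal{F}_e\cap L^{2}(\R^d,\mu)$ by taking an approximating sequence and applying Fatou's lemma (exactly the point you flagged as the main obstacle), and concludes by \cite{CKS}, see also \cite[Theorem~4.2.7]{FOT}. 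Your route costs generality, and your closing claim that $\beta>\alpha$ is ``exactly the condition'' making the transfer work is inaccurate on two counts. First, your own argument never uses $\beta>\alpha$ in an essential way: it only needs $W^{-1}\in L^{s}(\R^d,m)$ for some $s>d/\alpha$, which holds for every $\beta>0$ upon choosing $s>\max\{d/\alpha,\,d/\beta\}$. Second, Lemma~\ref{lem:uc} is stated under the standing assumption $\beta\ge 0$ of Example~\ref{ex:tc} and is true in that generality; for bounded $W$ (say $\beta=0$ and $W\equiv 2$) your H\"older step genuinely fails, since then $W^{-1}$ lies in no $L^{s}(\R^d,m)$ with $s<\infty$, while the pointwise bound $\mu\le m$ still applies. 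In this paper the hypothesis $\beta>\alpha$ has nothing to do with ultracontractivity; it is used only afterwards, in verifying condition~II through the Green function estimate $R_{0}^{\mu}\bone_{\R^d}\le c(d,\alpha)J_{d-\alpha,\beta}$.
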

\begin{proof}
By \cite[Theorem~1, p138]{EG} for $\alpha=2$ and \cite[Theorem~6.5]{DPV} for $\alpha \in (0,2)$, there exist positive constants $C>0$ and $q \in (2,\infty)$ such that
\begin{equation}
\left\{ \int_{\R^d}|f|^{q}\,d\mu \right\}^{2/q} \le \left\{ \int_{\R^d}|f|^{q}\,dm \right\}^{2/q} \le C\mathcal{E}(f,f),\quad f \in \mathcal{F}.\label{eq:sobolev}
\end{equation}
Let $\{f_n\}_{n=1}^{\infty} \subset \mathcal{F}$ be  an approximating sequence of $f \in \mathcal{F}^{\mu}=\mathcal{F}_e \cap L^{2}(\R^d,\mu)$. By using Fatou's lemma and \eqref{eq:sobolev}, we have
\begin{align*}
\left\{ \int_{\R^d}|f|^{q}\,d\mu \right\}^{2/q} \le \varliminf_{n \to \infty} \left\{ \int_{\R^d}|f_n|^{q}\,d\mu \right\}^{2/q}\le C\varliminf_{n \to \infty} \mathcal{E}(f_n,f_n)=C\mathcal{E}(f,f).
\end{align*}
The proof is complete by \cite{CKS}. See also \cite[Theorem~4.2.7]{FOT}.
\end{proof}

Let $U$ be an open subset of $\R^d$ and $X^{\mu,U}$ be the part of $X^\mu$ on $U$:
\begin{equation*}
X_t^{\mu,U}=\begin{cases}
X_t^{\mu}, \quad &t<T_U:=\inf\{t>0 \mid X_t^{\mu} \notin U\} \\
\partial,\quad &t \ge T_U.
\end{cases}
\end{equation*}
 The semigroup and the resolvent are denoted by $\{p_{t}^{\mu,U}\}_{t>0}$ and $\{R_{\gamma}^{\mu, U}\}_{\gamma>0}$, respectively. 

\begin{lemma}\label{lem:str}
Let $f \in \mathcal{B}_{b}(U)$, $\gm>0$, and $U \subset \R^d$ be a open subset. Then, $R_{\gm}^{\mu, U}f \in C_{b}(\R^d)$. In particular, for each $\gm>0$ and $x \in U$, the kernel $R_{\gm}^{\mu,U}(x,\cdot)$ is absolutely continuous with respect to $\mu|_{U}$.
\end{lemma}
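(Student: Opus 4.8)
The plan is to transport the problem to the underlying $\alpha$-stable process $X$ by the time change and then exploit the known strong Feller property of the $\alpha$-stable resolvent. Writing $\tau_U=\inf\{t>0\mid X_t\notin U\}$ for the exit time of $X$ from $U$, the strict increase and continuity of $A$ give $T_U=A_{\tau_U}$ and $X^{\mu}_{A_s}=X_s$, so the substitution $t=A_s$ (with $dt=W(X_s)^{-1}\,ds$) yields, for every $x\in\R^d$,
\begin{equation*}
R_{\gm}^{\mu,U}f(x)=E_x\!\left[\int_0^{\tau_U}\exp\!\Big(-\gm\!\int_0^s\frac{dr}{W(X_r)}\Big)\frac{f(X_s)}{W(X_s)}\,ds\right].
\end{equation*}
Thus $R_{\gm}^{\mu,U}f$ is the $0$-order Green operator, restricted to $U$, of the $\alpha$-stable process perturbed by the Feynman--Kac multiplicative functional with the bounded potential $V:=\gm/W$ (note $0\le V\le\gm$ since $W\ge1$), applied to the bounded function $h:=(f/W)\bone_U$. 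Boundedness is then immediate, since $|R_{\gm}^{\mu,U}f|\le\|f\|_{\infty}R_{\gm}^{\mu,U}\bone_U\le\|f\|_{\infty}/\gm$; so the entire issue is continuity, and I would emphasize that $V$ enters only as a bounded multiplier, so the mere Borel measurability of $W$ causes no difficulty.

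For continuity on $U$ I would reduce to the $\beta$-resolvent $R_{\beta}^U$ of the part of $X$ on $U$, which satisfies $R_{\beta}^U(\mathcal B_b(U))\subset C_b(U)$ by Remark~\ref{re}~(iii) and \cite[Theorem~1.4]{CK}, because $X$ is doubly Feller. A standard resolvent identity (proved probabilistically by the strong Markov property at $\tau_U$) gives, on $U$,
\begin{equation*}
R_{\gm}^{\mu,U}f=R_{\beta}^U\big[h+(\beta-V)\,R_{\gm}^{\mu,U}f\big].
\end{equation*}
The bracket is bounded and Borel on $U$, and the killing by $V$ and the change of order from $0$ to $\beta$ have both been absorbed into it; hence the right-hand side lies in $C_b(U)$ and $R_{\gm}^{\mu,U}f$ is continuous on the open set $U$.

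The main obstacle is upgrading this to continuity on all of $\R^d$, i.e. matching the value $0$ across $\partial U$ and on $U^c$. Here I would first treat $U=\R^d$: letting $R_{\beta}^{V}$ be the resolvent of the $V$-killed stable process $X^{V}$, the identity $R_{\beta}^{V}=R_{\beta}-R_{\beta}(V R_{\beta}^{V})$ together with the strong Feller (indeed $C_\infty$-Feller) property of the $\alpha$-stable resolvent $R_{\beta}$ shows $R_{\beta}^{V}$ is strong Feller, and then
\begin{equation*}
R_{\gm}^{\mu}f=R_{\beta}^{V}h+\beta\,R_{\beta}^{V}\big(R_{\gm}^{\mu}f\big)
\end{equation*}
shows the full resolvent $R_{\gm}^{\mu}f$ is strong Feller on $\R^d$. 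For general $U$, Dynkin's formula (Lemma~\ref{lem:d}) applied at $T_U$ gives, for all $x\in\R^d$,
\begin{equation*}
R_{\gm}^{\mu,U}f(x)=R_{\gm}^{\mu}f(x)-E_x\big[e^{-\gm T_U}\,(R_{\gm}^{\mu}f)(X^{\mu}_{T_U}):T_U<\infty\big],
\end{equation*}
so what remains is the continuity on $\R^d$ of this gauge--harmonic exit term applied to the continuous function $R_{\gm}^{\mu}f$. Away from $\partial U$ this is clear (on $\mathrm{int}\,U^c$ one has $T_U=0$ and the term equals $R_{\gm}^{\mu}f$, while on $U$ it is continuous by the previous paragraph), so the genuine difficulty is the behaviour at boundary points; I expect this to be the hard part, and would control it via the doubly Feller property of $X^{\mu}$ inherited from $X$, following \cite{CK}. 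Finally, the ``in particular'' assertion follows easily: for a $\mu$-null Borel set $N\subset U$, symmetry gives $\int_U R_{\gm}^{\mu,U}\bone_N\,d\mu=\int_U\bone_N\,R_{\gm}^{\mu,U}\bone_U\,d\mu\le\mu(N)/\gm=0$, so the (nonnegative, continuous) function $R_{\gm}^{\mu,U}\bone_N$ vanishes $\mu$-a.e. and hence everywhere since $\mu$ has full support, whence $R_{\gm}^{\mu,U}(x,\cdot)\ll\mu|_U$.
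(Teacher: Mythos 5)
Your core computation is correct and takes a genuinely different route from the paper. The time-change reduction is right: since $A$ is continuous and strictly increasing, $T_U=A_{\tau_U}$, and $R_{\gm}^{\mu,U}f$ is indeed the $0$-order Feynman--Kac resolvent of the part process $X^U$ with bounded rate $V=\gm/W\le\gm$ applied to $h=(f/W)\bone_U$. The boundedness estimate, the perturbation identity $R_{\gm}^{\mu,U}f=R_{\beta}^{U}\bigl[h+(\beta-V)R_{\gm}^{\mu,U}f\bigr]$ (which follows from Duhamel's formula and the Markov property at intermediate times, rather than at $\tau_U$), and the resulting continuity on $U$ are all sound, using that $R_{\beta}^{U}(\mathcal{B}_{b}(U))\subset C_{b}(U)$ because the stable process is doubly Feller. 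Your proof of the ``in particular'' part is also correct, and is the same symmetry argument the paper invokes via \cite[Exercise~4.2.1]{FOT}. By contrast, the paper's proof is citation-based: it notes that $W\ge 1$ gives $\sup_{x}E_{x}[A_t]\le t\to 0$, so $\mu=W^{-1}m$ is a Kato class measure, then uses \cite[Theorem~7.1]{KKT} to transfer the doubly Feller property of the resolvent from $X$ to the time-changed process $X^{\mu}$, and \cite[Theorem~3.1]{KKT} for the part process. Your argument replaces both citations by an explicit, self-contained perturbation computation.

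The gap is exactly where you flag it, and it is genuine in two respects. First, your plan to handle continuity across $\partial U$ via ``the doubly Feller property of $X^{\mu}$ inherited from $X$, following \cite{CK}'' rests on a miscitation: \cite{CK} concerns killed (part) processes, not time changes; the inheritance of the doubly Feller property under the time change is precisely the nontrivial input, and it is what the paper obtains from \cite[Theorem~7.1]{KKT} using the Kato class property of $\mu$ --- so as written, your plan presupposes the fact it was supposed to establish. Second, and more seriously, continuity on all of $\R^d$ cannot be proven for arbitrary open $U$: at a point $x_0\in\partial U$ that is irregular for $\R^d\setminus U$ one has $P_{x_0}(T_U>0)=1$, hence $R_{\gm}^{\mu,U}\bone_{U}(x_0)>0$, while the function vanishes on the interior of $U^{c}$; taking $\alpha=2$, $d\ge 3$ and $U^{c}$ containing a sufficiently sharp solid cusp (Lebesgue thorn) with tip $x_0$, the tip is a limit of interior points of $U^{c}$ and the function is discontinuous there (time change and the Feynman--Kac weight do not affect this, since they do not change regularity of boundary points). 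In other words, the literal conclusion $R_{\gm}^{\mu,U}f\in C_{b}(\R^d)$ only holds modulo such boundary pathologies; what the strong Feller theory actually delivers, and what the paper uses downstream (the proof of Proposition~\ref{prop:str} invokes only continuity of $R_{\gm}^{\mu,U}\varphi_n$ on $U$), is $R_{\gm}^{\mu,U}f\in C_{b}(U)$. Judged against that effective statement your proof is complete and different; judged against the statement as written, the boundary step you defer is not closable by the tools you name, nor by any others.
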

\begin{proof}
It is easy to see that $\lim_{t \to 0}\sup_{x \in \R^d}E_{x}[A_t]=0$. This means that $\mu$ is in the Kato class of $X$ in the sense of \cite{KKT}. Since the resolvent of $X$ is doubly Feller in the sense of \cite{KKT}, by \cite[Theorem~7.1]{KKT}, the resolvent of $X^{\mu}$ is also doubly Feller. By using \cite[Theorem~3.1]{KKT}, we complete the proof. ``In particular'' part follows from the same argument as in \cite[Exercise~4.2.1]{FOT}.
\end{proof}

Following the arguments in \cite[Theorem~5.1]{AK}, we strengthen Lemma~\ref{lem:str} as follows. 
\begin{prop}\label{prop:str}
Let $f \in \mathcal{B}_{b}(U)$, $t>0$, and $U \subset \R^d$ be a bounded open subset. Then, $p_{t}^{\mu,U}f \in C_{b}(U)$.
\end{prop}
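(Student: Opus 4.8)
The goal is to upgrade Lemma~\ref{lem:str}, which gives continuity of the \emph{resolvents} $R_\gamma^{\mu,U}$, to continuity of the \emph{semigroup} operators $p_t^{\mu,U}$ on a bounded open set $U$. The plan is to exploit the interplay between the resolvent and the semigroup, together with the ultracontractivity established in Lemma~\ref{lem:uc}, following the strategy of \cite[Theorem~5.1]{AK}. First I would record that, because $U$ is bounded, $\mu(U)=\int_U W^{-1}\,dm<\infty$; combined with ultracontractivity of $\{p_t^\mu\}_{t>0}$ this transfers to ultracontractivity of the part semigroup $\{p_t^{\mu,U}\}_{t>0}$, so that $p_t^{\mu,U}$ maps $L^1(U,\mu)$ into $L^\infty(U,\mu)$ and in fact admits a bounded symmetric heat kernel $p_t^{\mu,U}(x,y)$. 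The essential point is to produce a genuinely continuous version of $x\mapsto p_t^{\mu,U}f(x)$ for bounded Borel $f$.

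The key step is to write the semigroup as a smoothing of the resolvent. Concretely, I would use the identity $p_t^{\mu,U}f = p_{t/2}^{\mu,U}\bigl(p_{t/2}^{\mu,U}f\bigr)$ and the resolvent representation to move regularity from the resolvent (where we already have continuity by Lemma~\ref{lem:str}) onto the semigroup. A clean route is: by ultracontractivity $g:=p_{t/2}^{\mu,U}f\in L^\infty(U,\mu)\subset L^1(U,\mu)$, and then for the remaining half-step one exploits that the resolvent $R_1^{\mu,U}$ is strong Feller (the $L^\infty$-to-$C_b$ regularization of Lemma~\ref{lem:str}) to smooth $g$. Using the operator identity $p_t^{\mu,U} = \gamma R_\gamma^{\mu,U} p_t^{\mu,U} + \text{(remainder)}$ obtained from the resolvent equation, or equivalently the Laplace-transform relation $R_\gamma^{\mu,U}f=\int_0^\infty e^{-\gamma s}p_s^{\mu,U}f\,ds$ differentiated appropriately, one rewrites $p_t^{\mu,U}f$ as an integral against the continuous kernel $R_\gamma^{\mu,U}(x,\cdot)$ of a bounded function, and continuity in $x$ then follows from dominated convergence, using the joint continuity properties supplied by Lemma~\ref{lem:str}.

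In detail, I would argue as follows. Fix $t>0$ and $f\in\mathcal B_b(U)$. Set $h=p_{t}^{\mu,U}f$; by ultracontractivity $h\in L^\infty(U,\mu)$. The strong-Feller resolvent gives, for each $\gamma>0$, that $R_\gamma^{\mu,U}$ applied to a bounded function is continuous, with an absolutely continuous kernel $R_\gamma^{\mu,U}(x,dy)=r_\gamma^{\mu,U}(x,y)\,d\mu(y)$. Writing
\begin{equation*}
\gamma R_\gamma^{\mu,U}h(x)=\gamma\int_0^\infty e^{-\gamma s}p_s^{\mu,U}h(x)\,ds=\gamma\int_0^\infty e^{-\gamma s}p_{t+s}^{\mu,U}f(x)\,ds,
\end{equation*}
one has $\gamma R_\gamma^{\mu,U}h\in C_b(U)$ by Lemma~\ref{lem:str}, and by the strong continuity of the semigroup on $L^\infty$ along the compact-$K$ estimates available from the doubly-Feller theory, $\gamma R_\gamma^{\mu,U}h\to h$ as $\gamma\to\infty$. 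The remaining task is to upgrade this convergence to \emph{uniform} convergence on $U$, which then forces $h$ to be continuous as a uniform limit of continuous functions.

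The main obstacle is exactly this last upgrade: plain pointwise convergence $\gamma R_\gamma^{\mu,U}h\to h$ does not by itself give continuity of $h$. The resolution, and the crux of the argument borrowed from \cite[Theorem~5.1]{AK}, is to obtain an equicontinuity or uniform-approximation estimate. I would control $\|p_t^{\mu,U}f-\gamma R_\gamma^{\mu,U}p_t^{\mu,U}f\|_\infty$ by $\esssup_{x}\int_0^\infty \gamma e^{-\gamma s}\,|p_{t+s}^{\mu,U}f(x)-p_t^{\mu,U}f(x)|\,ds$ and bound the semigroup difference using the $L^\infty$-contractivity together with a quantitative modulus of continuity of $s\mapsto p_s^{\mu,U}f$ in the sup-norm, which is where the ultracontractivity (hence the bounded kernel $p_s^{\mu,U}(x,y)$ and its continuity in $s$ for $s$ bounded away from $0$) is indispensable. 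Once this uniform estimate is in hand, $p_t^{\mu,U}f$ is a uniform limit of the continuous functions $\gamma R_\gamma^{\mu,U}p_t^{\mu,U}f$, whence $p_t^{\mu,U}f\in C_b(U)$, completing the proof.
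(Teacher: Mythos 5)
Your strategy---approximate $p_t^{\mu,U}f$ by the continuous functions $\gamma R_\gamma^{\mu,U}\bigl(p_t^{\mu,U}f\bigr)$ and pass to a uniform limit---is genuinely different from the paper's, but it has a gap at exactly the step you yourself flag as the crux, and that gap cannot be closed by the tools you invoke. The quantity you must control is the \emph{pointwise} supremum
\begin{equation*}
\sup_{x\in U}\bigl|p_{t+s}^{\mu,U}f(x)-p_{t}^{\mu,U}f(x)\bigr|,
\end{equation*}
where $p_s^{\mu,U}f(x)=E_x[f(X_s^{\mu,U})]$ is defined at \emph{every} point $x$. Ultracontractivity (Lemma~\ref{lem:uc}) combined with strong continuity of the $L^2$-semigroup gives only
\begin{equation*}
\|p_{t+s}^{\mu,U}f-p_{t}^{\mu,U}f\|_{L^{\infty}(U,\mu)}
\le \|p_{t/2}^{\mu,U}\|_{L^{2}\to L^{\infty}}\,\|p_{t/2+s}^{\mu,U}f-p_{t/2}^{\mu,U}f\|_{L^{2}(U,\mu)}\longrightarrow 0 ,
\end{equation*}
i.e.\ a bound off a $\mu$-null set that may depend on $s$. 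To convert this essential-sup bound into a genuine sup bound you would need to know that the \emph{fixed-time} kernels $P_x(X_t^{\mu,U}\in\cdot)$ do not charge $\mu$-null sets, for every $x$ and every $t$. But at this stage only the \emph{resolvent} kernels are known to be absolutely continuous with respect to $\mu$ (Lemma~\ref{lem:str}); that yields $P_x(X_s^{\mu,U}\in N)=0$ only for Lebesgue-a.e.\ $s$, and upgrading ``a.e.\ $s$'' to ``every $s$'' is precisely the hard content of the proposition. The same issue already undermines the earlier step: even the \emph{pointwise} convergence $\gamma R_\gamma^{\mu,U}h(x)\to h(x)$ for $h=p_t^{\mu,U}f$ requires right-continuity of $s\mapsto p_{t+s}^{\mu,U}f(x)$ at $s=0$, which for merely Borel $f$ would follow either from continuity of $p_t^{\mu,U}f$ (the statement being proved) or from fixed-time kernel absolute continuity; and there is no strong continuity on $L^\infty$ to appeal to, so your invocation of it is not available.

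The paper's proof (following \cite{AK}) is designed to circumvent exactly this obstruction. Using discreteness of the spectrum it forms the expansion $p_t^{\mu,U}(x,y)=\sum_{n}e^{-\lambda_n t}\varphi_n(x)\varphi_n(y)$, produces bounded \emph{continuous} versions of the eigenfunctions via the identity $R_\gamma^{\mu,U}\varphi_n=(\gamma+\lambda_n)^{-1}\varphi_n$ and Lemma~\ref{lem:str}, gets a jointly continuous bounded kernel from uniform convergence of the series, and then---this is the decisive step your proposal lacks---identifies $E_x[f(X_t^{\mu,U})]$ with $\int_U p_t^{\mu,U}(x,y)f(y)\,d\mu(y)$ for \emph{every} $x$ and \emph{every} $t$, by uniqueness of Laplace transforms (equality for a.e.\ $t$), right-continuity of sample paths (upgrading to all $t$ when $f$ is continuous), and a monotone class argument (general Borel $f$). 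Continuity of $p_t^{\mu,U}f$ then follows from dominated convergence, using $\mu(U)<\infty$. If you wanted to salvage your resolvent-approximation scheme, you would first have to prove this everywhere-in-$(x,t)$ kernel identification anyway---at which point continuity already follows and the approximation becomes superfluous.
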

\begin{proof}
Step~1: We denote by $(\mathcal{L}_{U}, D(\mathcal{L}_{U}))$ the non-positive generator of $\{p_{t}^{\mu,U}\}$ on $L^{2}(U,\mu)$. By Lemma~\ref{lem:uc}, $-\mathcal{L}_{U}$ has only discrete spectrum. Let $\{\lambda_n\}_{n=1}^{\infty} \subset [0,\infty)$ be the eigenvalues of $-\mathcal{L}_{U}$ written in increasing order repeated according to multiplicity, and let $\{\varphi_n\}_{n=1}^{\infty} \subset  D(\mathcal{L}_{U})$ be the corresponding eigenfunctions: $-\mathcal{L}_{U}\varphi_n=\lambda_n \varphi_n$. Then, $\varphi_n=e^{\lambda_n} p_{1}^{\mu,U}\varphi_n \in L^{\infty}(\R^d,\mu)$ by Lemma~\ref{lem:uc}. Hence,  for each $n \in \N$, there exists a bounded measurable version of $\varphi_n$ (still denoted as $\varphi_n$). By Lemma~\ref{lem:str}, for each $\gm>0$ and $n \in \N$, $R_{\gm}^{\mu,U}\varphi_n$ is continuous on $U$. Furthermore, we see from  \cite[Theorem~4.2.3]{FOT} that
\begin{equation}
R_{\gm}^{\mu,U}\varphi_n=(\gm-\mathcal{L}_U)^{-1}\varphi_n=(\gm+\lambda_n)^{-1}\varphi_n\quad \mu\text{-a.e. on }U \label{eq:identity}.
\end{equation}
Therefore, there exists a (unique) bounded continuous version of $\varphi_n$ (still denoted as $\varphi_n$). By \cite[Theorem~2.1.4]{Da}, the series 
\begin{equation}
p_{t}^{\mu,U}(x,y):=\sum_{n=1}^{\infty}e^{-\lambda_n t}\varphi_n(x) \varphi_n(y)\label{eq:expan}
\end{equation}
absoluetely converges uniformly on $[\eps,\infty) \times U \times U$ for any $\eps>0$. Since $\{\varphi_n\}_{n=1}^{\infty}$ are bounded continuous on $U$, $p_{t}^{\mu,U}(x,y)$ is also continuous on $(0,\infty) \times U \times U$ and defines an integral kernel of $\{ p_{t}^{\mu, U}\}_{t>0}$. Namely, for each $t>0$ and $f \in L^{2}(U,\mu)$,
\begin{equation}
p_{t}^{\mu,U}f(x)=\int_{U}p_{t}^{\mu,U}(x,y)f(y)\, d\mu(y)\quad\text{for $\mu$-a.e. }x\in U. \label{eq:expan2}
\end{equation}
The uniform convergence of the series \eqref{eq:expan} imply the boundedness of $p_{t}^{\mu,U}(x,y)$ on $[\eps,\infty) \times U \times U$ for each $\eps>0$. We also note that $p_{t}^{\mu,U}(x,y) \ge 0$ by \eqref{eq:expan2} and the fact that $p_{t}^{\mu,U}f \ge 0$ $\mu$-a.e. for any $f \in L^{2}(U,\mu)$ with $f \ge 0$.

Step~2: In this step, we show that for each $x \in U$, $\gm>0$, and $f \in \mathcal{B}_{b}(\R^d)$,
\begin{equation}
\int_{0}^{\infty}e^{-\gm t}E_{x}[f(X_{t}^{\mu,U})]\,dt=\int_{0}^{\infty}e^{-\gm t}\left(\int_{U}p_{t}^{\mu, U}(x,y)f(y)\,d\mu(y)\right)\,dt\label{eq:laplace}.
\end{equation}
By the absolute continuity of $R_{\gm}^{\mu,U}$ (Lemma~\ref{lem:str}), for any $\eps>0$, 
\begin{align*}
&\int_{\eps}^{\infty}e^{-\gm t}E_{x}[f(X_{t}^{\mu,U})]\,dt=e^{-\gm \eps}R_{\gm}^{\mu,U}(p_{\eps}^{\mu,U}f)(x) \\
&=e^{-\gm \eps}R_{\gm}^{\mu,U}\left(\sum_{n=1}^{\infty}e^{-\lambda_n \eps}\left(\int_{U}\varphi_n(y)f(y)\,d\mu(y) \right)\varphi_n \right)(x)\\
&=\sum_{n=1}^{\infty}e^{-(\gm+\lambda_n)\eps}(\gm+\lambda_n)^{-1}\left(\int_{U}\varphi_n(y)f(y)\,d\mu(y) \right)\varphi_n(x).
\end{align*}
Here, we used the identity \eqref{eq:identity} and the uniform convergence of the series \eqref{eq:expan}. Set $$a_n^{\eps}=e^{-(\gamma+\lambda_n)\eps}(\gamma+\lambda_n)^{-1}=\int_{\eps}^{\infty}e^{-(\gamma+\lambda_n)t}\,dt.$$  Since the series \eqref{eq:expan} uniformly converges on $[\eps,\infty) \times U \times U$ for each $\eps>0$, 
\begin{align}
&\int_{\eps}^{\infty}e^{-\gm t}E_{x}[f(X_{t}^{\mu,U})]\,dt=\sum_{n=1}^{\infty}a_{n}^{\eps}\left(\int_{U}\varphi_n(y)f(y)\,d\mu(y) \right)\varphi_n(x) \notag \\
&=\sum_{n=1}^{\infty}\int_{\eps}^{\infty}\int_{U}e^{-\lambda_nt}\varphi_n(y)\varphi_n(x) f(y) \,d\mu(y)e^{-\gm t}\,dt \notag \\
&=\int_{\eps}^{\infty}\left(\int_{U}p_{t}^{\mu,U}(x,y)f(y)\,d\mu(y) \right)e^{-\gamma t}\,dt. \label{eq:laplace2}
\end{align}
By letting $\eps \to 0$ in \eqref{eq:laplace2}, we obtain \eqref{eq:laplace}.

Step~3: By \eqref{eq:laplace} and the uniquness of Laplace transforms, it holds that
\begin{equation}
E_{x}[f(X_{t}^{\mu,U})]=\int_{U}p_{t}^{\mu, U}(x,y)f(y)\,d\mu(y)\quad dt\text{-a.e. }t\in(0,\infty) \label{eq:laplace3}
\end{equation}
for any $x \in E$ and $f \in \mathcal{B}_{b}(\R^d)$. If $f$  is bounded continuous on $U$, by the continuity of $X_t^{\mu}$ and $p_{t}^{\mu,U}(x,y)$, \eqref{eq:laplace3} holds for any $t \in (0,\infty)$. By using a monotone class argument, we have
\begin{equation*}
E_{x}[f(X_{t}^{\mu,U})]=\int_{U}p_{t}^{\mu,U}(x,y)f(y)\,d\mu(y)
\end{equation*}
for any $x \in E$ and $f \in \mathcal{B}_{b}(\R^d)$, and $t>0$. By Step~1, for each $t>0$, $p_t^{\mu,U}(x,y)$ is bounded continuous on $U \times U$. Since $\mu(U)<\infty$, the proof is complete by dominated convergence theorem.
\end{proof}

\begin{cor}\label{cor:str}
For any $f \in \mathcal{B}_{b}(\R^d)$ and $t>0$, $p_{t}^{\mu}f \in C_{b}(\R^d)$.
\end{cor}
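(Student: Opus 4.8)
The plan is to transfer the strong Feller property of the part processes on bounded sets (Proposition~\ref{prop:str}) to $X^\mu$ on all of $\R^d$ by a compact exhaustion and Dynkin's formula. Fix $f\in\mathcal{B}_{b}(\R^d)$ and $t>0$, and apply Lemma~\ref{lem:d} to $X^\mu$ with $U=B(n)$:
\[
p_t^\mu f=p_t^{\mu,B(n)}f+E_{(\cdot)}\!\left[p_{t-T_{B(n)}}^\mu f(X^\mu_{T_{B(n)}}):T_{B(n)}\le t\right].
\]
By Proposition~\ref{prop:str} the first term is continuous on $B(n)$, and the second is bounded in absolute value by $\|f\|_{L^\infty(\R^d,m)}\,(p_t^\mu\bone-p_t^{\mu,B(n)}\bone)$ (Lemma~\ref{lem:d} with $f=\bone$). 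Since $p_t^{\mu,B(n)}\bone(x)=P_x(t<T_{B(n)})\uparrow P_x(t<\zt^\mu)=p_t^\mu\bone(x)$ by monotone convergence (using $T_{B(n)}\uparrow\zt^\mu$, which is Lemma~\ref{lem:ql} applied to $X^\mu$), the whole statement reduces to the single estimate
\[
\lim_{n\to\infty}\sup_{x\in K}\bigl(p_t^\mu\bone-p_t^{\mu,B(n)}\bone\bigr)(x)=0,\qquad K\subset\R^d\ \text{compact}.
\]
Granting this, $p_t^\mu f$ is a locally uniform limit of continuous functions and hence continuous, while $\|p_t^\mu f\|_{L^\infty}\le\|f\|_{L^\infty}$ is automatic. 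In particular it suffices to treat $f=\bone$.

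For the quantitative input I would use transience. Because $d>\alpha$, the $0$-Green function of $X$ is $G(y,z)=c_{d,\alpha}|y-z|^{\alpha-d}$, so
\[
E_y[\zt^\mu]=E_y[A_\infty]=\int_{\R^d}G(y,z)W(z)^{-1}\,dz\le c_{d,\alpha}\int_{\R^d}\frac{|y-z|^{\alpha-d}}{1+|z|^{\beta}}\,dz,
\]
and for $\beta>\alpha$ this tends to $0$ as $|y|\to\infty$; hence $a_n:=\sup_{|y|\ge n}E_y[\zt^\mu]\to0$ and $\sup_y E_y[\zt^\mu]<\infty$. Writing $g_n(x)=(p_t^\mu\bone-p_t^{\mu,B(n)}\bone)(x)=P_x(T_{B(n)}\le t<\zt^\mu)$ and splitting on $\{T_{B(n)}\le t/2\}$ and $\{t/2<T_{B(n)}\le t\}$, the strong Markov property at $T_{B(n)}$ gives $E_x[\zt^\mu-T_{B(n)};T_{B(n)}<\zt^\mu]=E_x\bigl[E_{X^\mu_{T_{B(n)}}}[\zt^\mu]\bigr]\le a_n$, so on the first event (where $\zt^\mu-T_{B(n)}>t/2$) a Chebyshev bound gives a contribution $\le 2a_n/t$. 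On the second event I would cut again at $\{T_{B(n)}\le t-\eps\}$ and $\{t-\eps<T_{B(n)}\le t\}$ (with $\eps\in(0,t/2)$), using $P_{X^\mu_{T_{B(n)}}}(\zt^\mu>t-T_{B(n)})\le a_n/\eps$ for the survival factor on the former, to obtain
\[
\sup_{x\in K}g_n(x)\le\frac{2a_n}{t}+\frac{a_n}{\eps}+\sup_{x\in K}\bigl(p_{t-\eps}^{\mu,B(n)}\bone-p_t^{\mu,B(n)}\bone\bigr)(x).
\]

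The main obstacle is the last term, which measures the probability that $X^\mu$ leaves $B(n)$ during the short window $(t-\eps,t]$: I must show $\lim_{\eps\to0}\limsup_{n\to\infty}\sup_{x\in K}P_x(t-\eps<T_{B(n)}\le t)=0$. This is the delicate point, because the far-field estimate alone cannot see paths that escape to infinity at the last instant before exploding, and soft monotonicity only yields that $p_t^\mu\bone$ is lower semicontinuous. I expect to control it through the regularity of the part semigroups: each $p_s^{\mu,B(n)}\bone$ is smooth in $s$ (Proposition~\ref{prop:str}, Step~1) and, being governed by a self-adjoint contraction semigroup, should admit an analytic-type derivative bound $\|\partial_s p_s^{\mu,B(n)}\bone\|_{L^\infty}\le C/s$ with $C$ independent of $n$, which gives the uniform modulus $\sup_K(p_{t-\eps}^{\mu,B(n)}\bone-p_t^{\mu,B(n)}\bone)\le 2C\eps/t$; equivalently, one argues that the explosion time $\zt^\mu$ carries no atoms uniformly for starting points in $K$, for which the continuous resolvent densities furnished by Lemma~\ref{lem:str} are the natural tool. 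Letting first $n\to\infty$ (so that $a_n\to0$) and then $\eps\to0$ would then drive $\sup_{x\in K}g_n(x)\to0$ and complete the proof.
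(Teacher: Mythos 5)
Your proposal is not a complete proof: everything up to the last display is correct, but the step you yourself call the ``main obstacle'' is a genuine gap, not a technicality. The reduction via Dynkin's formula (Lemma~\ref{lem:d}), the continuity of $p_t^{\mu,B(n)}f$ from Proposition~\ref{prop:str}, the bound $|p_t^\mu f-p_t^{\mu,B(n)}f|\le\|f\|_\infty\bigl(p_t^\mu\bone-p_t^{\mu,B(n)}\bone\bigr)$, the far-field estimate $a_n=\sup_{|y|\ge n}E_y[\zt^\mu]\to0$ (the same Green-function computation the paper uses for condition~II in Theorem~\ref{th:tc}), and the two Chebyshev splittings are all sound. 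What is missing is any control of $\limsup_{n}\sup_{x\in K}P_x(t-\eps<T_{B(n)}\le t)$, and neither mechanism you suggest delivers it. Self-adjointness gives the derivative bound $\|\partial_s p_s^{\mu,B(n)}\|\le C/s$ on $L^2(\mu)$, \emph{not} on $L^\infty$: symmetric Markov semigroups are bounded analytic on $L^p$ only for $1<p<\infty$ in general, and you establish nothing uniform in $n$. The no-atom route fares no better: reverse Fatou yields only the pointwise statement $\limsup_n P_x(t-\eps<T_{B(n)}\le t)\le P_x(t-\eps\le\zt^\mu\le t)$, and upgrading pointwise decay to uniformity on $K$ by Dini's theorem would require upper semicontinuity of $x\mapsto P_x(T_{B(n)}\le t<\zt^\mu)$; but this function is a difference of a lower semicontinuous function ($p_t^\mu\bone$, an increasing limit of continuous functions) and a continuous one, hence only lower semicontinuous --- exactly the circularity you noticed.

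For comparison, the paper finishes at precisely this point in one line: it bounds the error by $\|f\|_\infty P_x[t\ge T_U]$ and asserts $\lim_{U\nearrow\R^d}\sup_{x\in K}P_x[t\ge T_U]=0$ ``by Lemma~\ref{lem:ql} and Dini's theorem''. Your distrust of such a soft argument is well founded: since $X^\mu$ is explosive with $\sup_xE_x[\zt^\mu]<\infty$, the continuous functions $P_x[t\ge T_U]=1-p_t^{\mu,U}\bone(x)$ decrease to $P_x[t\ge\zt^\mu]$, which is strictly positive for large $t$, so the displayed limit fails as written; and after correcting the event to $\{T_U\le t<\zt^\mu\}$, Dini's theorem needs the continuity (or upper semicontinuity) of exactly the quantity being proved. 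In other words, you have isolated a real difficulty that the paper's own argument glosses over rather than resolves; but as submitted, your proposal reduces the corollary to an unproved and nontrivial claim. To close it one needs a genuine quantitative input, e.g.\ a uniform-in-$n$ $L^\infty$ short-time regularity estimate for $p_s^{\mu,B(n)}\bone$, which one can try to assemble from the kernel domination $p_s^{\mu,B(n)}(x,y)\le p_s^{\mu}(x,y)$ and ultracontractivity (Lemma~\ref{lem:uc}), Stein's $L^p$-analyticity for one large finite $p$, and the $L^p(\mu)$-integrability of $\min\{1,J_{d-\alpha,\beta}\}$; nothing of this sort appears in the proposal.
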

\begin{proof}
Let $K$ be a compact subset of $\R^d$. For any bounded open subset $U \subset \R^d$ with $K \subset U$,
\begin{align*}
&\sup_{x \in K}|p_{t}^{\mu}f(x)-p_{t}^{\mu,U}f(x)|\le \|f\|_{L^{\infty}(E,\mu)}\times \sup_{x \in K}P_{x}[t \ge T_U].
 \end{align*}
By Proposition~\ref{prop:str}, $p_{t}^{\mu,U}f$ is continuous on $K$. By Lemma~\ref{lem:ql} and Dini's theorem, $$\lim_{U \nearrow \R^d} \sup_{x \in K}P_{x}[t \ge T_U]=0,$$ which complete the proof.
\end{proof}

\begin{proof}[Proof of Theorem~\ref{th:tc}]
By Lemma~\ref{lem:uc} and Corollary~\ref{cor:str}, the conditions~I and III are satisfied. We shall prove the condition~II.  Let $\gamma_1,\gamma_2>0$ such that $\gamma_1<d$ and $\gamma_1+\gamma_2>d$. Setting 
\begin{equation*}
J_{\gamma_1,\gamma_2}(x)=\int_{\R^d}\frac{dy}{|x-y|^{\gamma_1}(1+|y|^{\gamma_2})}\quad x \in \R^d,
\end{equation*}
$J_{\gamma_1,\gamma_2}$ is bounded on $\R^d$ and there exist positive constants $c_1,c_2,c_3$ such that 
\begin{equation}
J_{\gamma_1,\gamma_2}(x)\le \begin{cases}
c_1|x|^{d-(\gamma_1+\gamma_2)}, & \text{if } \gamma_2<d, \\
c_2(1+|x|)^{-\gamma_1} \log |x| & \text{if }\gamma_2=d, \\
c_3(1+|x|)^{-\gamma_1} &\text{if } \gamma_2>d
\end{cases} \label{eq:eqbounds}
\end{equation}
for any $x \in \R^d$. See \cite[Lemma~6.1]{MS}  for the bounds \eqref{eq:eqbounds}. 

 We denote by $G(x,y)$ the Green function of $X$. It is known that
\begin{equation*}
G(x,y)=c(d,\alpha) |x-y|^{\alpha-d}.
\end{equation*}
Here $c(d,\alpha)=2^{1-\alpha}\pi^{-d/2}\Gamma((d-\alpha)/2)\Gamma(\alpha/2)^{-1}$ and $\Gamma$ is the gamma function: $$\Gamma(s)=\int_{0}^{\infty}x^{s-1}\exp(-x)\,dx.$$ Recall that $\beta>\alpha$. Since 
\begin{align*}
R_{0}^{\mu}\bone_{\R^d}(x)&=\int_{\R^d}G(x,y)\,d\mu(y)\le c(d,\alpha) \int_{\R^d}\frac{dy}{|x-y|^{d-\alpha}W(y)} \\
& \le c(d,\alpha) \int_{\R^d}\frac{dy}{|x-y|^{d-\alpha}(1+|y|^{\beta})}\\
&=c(d,\alpha)J_{d-\alpha,\beta}(x),
\end{align*}
$R_{0}^{\mu}\bone_{\R^d}$ is bounded on $\R^d$ and  $\lim_{x \in \R^d, |x| \to \infty}R_{0}^{\mu}\bone_{\R^d}(x)=0$.
\end{proof}
\end{example}

\noindent
{\bf Acknowledgements.} The author would like to thank referees for their valuable comments and suggestions which improve the quality of the paper.
He also would like to thank Professor Masayoshi Takeda for helpful comments and encouragement.
He also would like to thank Professors Kwa\'snicki Mateusz, Masanori Hino and Naotaka Kajino for their helpful comments on Remark~\ref{rem:HC}.
\begin{bibdiv}
\begin{biblist}

\bib{AK}{article}{
   author={Andres, Sebastian},
   author={Kajino, Naotaka},
   title={Continuity and estimates of the Liouville heat kernel with
   applications to spectral dimensions},
   journal={Probab. Theory Related Fields},
   volume={166},
   date={2016},
   number={3-4},
   pages={713--752},
   issn={0178-8051}
}

\bib{CKS}{article}{
   author={Carlen, E. A.},
   author={Kusuoka, S.},
   author={Stroock, D. W.},
   title={Upper bounds for symmetric Markov transition functions},
   language={English, with French summary},
   journal={Ann. Inst. H. Poincar\'{e} Probab. Statist.},
   volume={23},
   date={1987},
   number={2, suppl.},
   pages={245--287},
}

\bib{CK}{article}{
   author={Chen, Zhen-Qing},
   author={Kuwae, Kazuhiro},
   title={On doubly Feller property},
   journal={Osaka J. Math.},
   volume={46},
   date={2009},
   number={4},
   pages={909--930},
   issn={0030-6126},
}

\bib{Da}{book}{
   author={Davies, E. B.},
   title={Heat kernels and spectral theory},
   series={Cambridge Tracts in Mathematics},
   volume={92},
   publisher={Cambridge University Press, Cambridge},
   date={1990},
   pages={x+197}
}

\bib{DPV}{article}{
   author={Di Nezza, Eleonora},
   author={Palatucci, Giampiero},
   author={Valdinoci, Enrico},
   title={Hitchhiker's guide to the fractional Sobolev spaces},
   journal={Bull. Sci. Math.},
   volume={136},
   date={2012},
   number={5},
   pages={521--573},
   issn={0007-4497}
}

\bib{EG}{book}{
   author={Evans, Lawrence C.},
   author={Gariepy, Ronald F.},
   title={Measure theory and fine properties of functions},
   series={Studies in Advanced Mathematics},
   publisher={CRC Press, Boca Raton, FL},
   date={1992},
   pages={viii+268},
   isbn={0-8493-7157-0}
}

\bib{FOT}{book}{
   author={Fukushima, Masatoshi},
   author={Oshima, Yoichi},
   author={Takeda, Masayoshi},
   title={Dirichlet forms and symmetric Markov processes},
   series={De Gruyter Studies in Mathematics},
   volume={19},
   edition={Second revised and extended edition},
   publisher={Walter de Gruyter \& Co., Berlin},
   date={2011},
   pages={x+489}
}

\bib{KKT}{article}{
   author={Kurniawaty, Mila},
   author={Kuwae, Kazuhiro},
   author={Tsuchida, Kaneharu},
   title={On the doubly Feller property of resolvent},
   journal={Kyoto J. Math.},
   volume={57},
   date={2017},
   number={3},
   pages={637--654},
   issn={2156-2261}
}

\bib{KM}{article}{
   author={Kwa\'snicki, Mateusz},
   title={Intrinsic ultracontractivity for stable semigroups on unbounded
   open sets},
   journal={Potential Anal.},
   volume={31},
   date={2009},
   number={1},
   pages={57--77},
   issn={0926-2601}
}

\bib{MS}{article}{
   author={Metafune, Giorgio},
   author={Spina, Chiara},
   title={Elliptic operators with unbounded diffusion coefficients in $L^p$
   spaces},
   journal={Ann. Sc. Norm. Super. Pisa Cl. Sci. (5)},
   volume={11},
   date={2012},
   number={2},
   pages={303--340},
   issn={0391-173X}
}

\bib{T2}{article}{
   author={Takeda, Masayoshi},
   title={$L^p$-independence of the spectral radius of symmetric Markov
   semigroups},
   conference={
      title={Stochastic processes, physics and geometry: new interplays, II},
      address={Leipzig},
      date={1999},
   },
   book={
      series={CMS Conf. Proc.},
      volume={29},
      publisher={Amer. Math. Soc., Providence, RI},
   },
   date={2000},
   pages={613--623},
}

\bib{T4}{article}{
   author={Takeda, Masayoshi},
   title={A tightness property of a symmetric Markov process and the uniform
   large deviation principle},
   journal={Proc. Amer. Math. Soc.},
   volume={141},
   date={2013},
   number={12},
   pages={4371--4383},
}

\bib{T3}{article}{
   author={Takeda, Masayoshi},
   title={Compactness of symmetric Markov semi-groups and boundedness of eigenfuntions, to appear in Trans. Amer. Math. Soc.},
   journal={Available from
\url{ https://www.ams.org/journals/tran/earlyview/tran7664/tran7664.pdf}}
}

\bib{TTT}{article}{
   author={Takeda, Masayoshi},
   author={Tawara, Yoshihiro},
   author={Tsuchida, Kaneharu},
   title={Compactness of Markov and Schr\"odinger semi-groups: a probabilistic
   approach},
   journal={Osaka J. Math.},
   volume={54},
   date={2017},
   number={3},
   pages={517--532},
}

\end{biblist}
\end{bibdiv}

\end{document}